\newtheorem{theorem}{Theorem}[section]
\newtheorem{corollary}[theorem]{Corollary}
\newtheorem{lemma}[theorem]{Lemma}
\newtheorem{proposition}[theorem]{Proposition}
\newcommand{\Asai}{{\rm Asai}}
\newcommand{\Hom}{{\rm Hom}}
\newcommand{\sgn}{{\rm sgn}}
\newcommand{\GL}{\mathrm{GL}}
\newcommand{\SL}{\mathrm{SL}}
\newcommand{\proj}{\mathrm{proj}}
\newcommand{\st}{\mathrm{st}}
\newcommand{\tg}{\mathrm{tg}}
\DeclareMathOperator{\Gal}{Gal}
\DeclareMathOperator{\Frob}{Frob}
\newcommand{\PGL}{\mathrm{PGL}}
\newcommand{\PSL}{\mathrm{PSL}}
\newcommand{\donothing}[1]{}
\newcommand{\mat}[4]{
 \left(  \begin{smallmatrix} #1 & #2 \\ #3 & #4 \end{smallmatrix} \right)}
\newcommand{\cG}{\mathcal{G}}
\newcommand{\fp}{\mathfrak{p}}
\newcommand{\AAA}{\mathbb{A}}
\newcommand{\CC}{\mathbb{C}}
\newcommand{\FF}{\mathbb{F}}
\newcommand{\PP}{\mathbb{P}}
\newcommand{\QQ}{\mathbb{Q}}
\newcommand{\ZZ}{\mathbb{Z}}
\newcommand{\Qbar}{\overline{\QQ}}
\newcommand{\Fbar}{\overline{\FF}}
\newcommand{\Gbar}{\overline{G}}
\newcommand{\rhotilde}{\widetilde{\rho}}
\newcommand{\gb}{{\overline{g}}}
\newcommand{\hb}{{\overline{h}}}
\newcommand{\ib}{{\overline{i}}}
\newcommand{\res}{\mathrm{res}}
\newcommand{\infl}{\mathrm{infl}}
\newcommand{\tr}{\mathrm{tr}}
\DeclareMathOperator{\h}{H}
\DeclareMathOperator{\z}{Z}
\newcommand{\exclude}[1]{}
\begin{document}
\title[Splitting fields, prime decomposition and modular forms]{Splitting fields of  $X^n-X-1$ (particularly for $n=5$), prime decomposition and modular forms}
\author{Chandrashekhar B.\ Khare, Alfio Fabio La Rosa, Gabor Wiese}

\address[C.\ B.\ Khare]{Department of Mathematics, University of California, Los Angeles, CA 90095, U.S.A.}
\email{shekhar@math.ucla.edu}

\address[A.\ F.\ La Rosa, G.\ Wiese]{Department of Mathematics, University of Luxembourg, Maison du nombre, 6~avenue de la Fonte, L-4364 Esch-sur-Alzette, Luxembourg}
\email{fabio.larosa@uni.lu, gabor.wiese@uni.lu}

\dedicatory{Dedicated to the memory of Bas Edixhoven.}

\maketitle

\begin{abstract}
We study the splitting fields of the  family of  polynomials $f_n(X)= X^n-X-1$.  This family of polynomials has been much studied in the literature and has some remarkable properties. In \cite{Jordan},  Serre related the function on primes $N_p(f_n)$, for a fixed $n \leq 4$ and $p$ a varying prime,  which counts the number of roots of $f_n(X)$ in $\mathbb F_p$ to coefficients of modular  forms. We study the case  $n=5$, and relate $N_p(f_5)$ to mod $5$  modular   forms over $\mathbb Q$, and  to characteristic 0,  parallel weight 1 Hilbert modular forms over $\mathbb Q(\sqrt{19 \cdot 151})$. 
\end{abstract}

\section{Introduction}

Serre in \cite{Jordan} considers the family of polynomials $f_n(X)= X^n-X-1 \in \ZZ[X]$ for integers $n \geq 2$.

This is a fascinating family in a number of ways.
The irreducibility of $f_n$ was established by Selmer in~\cite{Selmer}.
The discriminant of $f_n$ equals $d_{f_n} = (-1)^{(n-1)(n-2)/2} \cdot  (n^n - (1-n)^{n-1})$.
The first remarkable fact is that the discriminant of the associated number field $K_{f_n} = \QQ[X]/(f_n(X))$ is squarefree and that the residue degree of any ramified prime is~$1$.\footnote{This follows immediately from \cite[Theorems 1,2]{LNV2}. It can also be proved directly as follows. Let $p$ be a prime dividing $d_{f_n}$. Then $p \nmid n(n-1)$, in particular $p \neq 2$. Suppose $a \in \Fbar_p$ is a multiple root of~$f_n(X)$ modulo~$p$. Then it is also a root of $f_n'(X) = n X^{n-1}-1$ modulo~$p$, from which $a=n/(1-n)$ follows. As $f_n''(a) \neq 0$ we see that $f_n(X)$ factors as $f_n(X) \equiv (X-\frac{n}{1-n})^2 \cdot g(X) \mod{p}$ with $g(X)$ being squarefree and coprime to $(X-\frac{n}{1-n})$ modulo~$p$. From this it follows that there is at most one ramified prime above~$p$ and then that prime has residue degree~$1$ and ramification index~$2$. Consequently, the discriminant of the number field is squarefree.}
By work of Kondo~\cite{Kondo} (Theorems 1 and~2) this implies that the Galois group of $f_n(X)$, {\it i.e.} the Galois group of the Galois closure $K$ of $K_{f_n}$ over $\QQ$, is the symmetric group~$S_n$, that $K$ contains the quadratic field $E=\QQ(\sqrt{d_{f_n}})$ and that the extension $K/E$ is unramified at all finite primes.
It should be stressed that constructing polynomials (and number fields) with squarefree discriminants is a hard problem. Kedlaya~\cite{Kedlaya} gave a construction, whose crucial point is that the signature of the field can be prescribed.
In our case, the signature of $K_{f_n}$ is $(1,n-1)$ if $n$ is odd\footnote{Indeed, $f_n$ has exactly one real root if $n$ is odd because $f_n(x) < 0$ for all $x \le 1$ and it does not possess any local extremum at any $x \ge 1$.} and $(2,n-2)$ if $n$ is even.\footnote{Indeed, if $n$ is even, then $f_n$ has exactly two real roots because it has a unique local minimum and takes a negative value there.}
Consequently, the image of any complex conjugation of $K/\QQ$ in $S_n$ is a product of $\lfloor \frac{n-1}{2} \rfloor$ transpositions with disjoint supports and $E$ is real if and only if $n$ is congruent to $1$ or $2$ modulo~$4$.

It might be useful to mention that the polynomials $f_n$ themselves do not always have a squarefree discriminant. These have been studied on their own in a number of papers, for instance in~\cite{BMT}. The first $n$ for which $d_{f_n}$ is not squarefree is $130$.\footnote{See \url{https://oeis.org/A238194}}
It is important to point out that the squarefreeness of $d_{f_n}$ implies that $\ZZ[X]/(f_n(X))$ is the ring of integers of~$K_{f_n}$ because in general the square of the index divides discriminant. We will exploit this for $n=5$.

In \cite{Jordan}, Serre asks for information about the number of roots $N_p(f_n)$ of these  polynomials in $\FF_p$. In other words he considers the point counting function $\#A_{f_n}(\FF_p)$  where  $A_{f_n}$ is the zero-dimensional affine scheme $\ZZ[X]/(f_n(X))$.
We let $\theta_n^\st$ be the $n-1$-dimensional standard representation defined as the quotient of the natural $n$-dimensional permutation representation of~$S_n$ modulo the trivial one. Then we have for all unramified primes~$p$
\[N_p(f) = \tr(\theta_n^\st(\Frob_p)) + 1.\]
For $n \leq 4$, Serre relates the $N_p(f_n)$’s to coefficients of modular forms, as follows.

\subsection*{$n=3$} (See~\cite[\S5.3 and corresponding notes]{Jordan}.) For $n=3$ the splitting field of $f_3$ is an $S_3$-extension $K$ of $\QQ$ and $\zeta_{K_{f_3}}(s)/\zeta(s)$ is a holomorphic function, equal to the Artin $L$-function $L(\theta_3^\st,s)$. The representation $\theta_3^\st$ arises by induction of an order 3 character of $\Gal(K/E)$ where $E$ is $\QQ(\sqrt{-23})$. It turns  out that $K$ is the Hilbert class field of $E$, and in fact is the maximal unramified extension of $E$.
Thus the representation $\theta_3^\st$  arises from the weight one (dihedral or CM) modular form $F(z) \in S_1(\Gamma_0(23), (\frac{\cdot}{23}))$ given by the product formula (with   $q=e^{2\pi iz}$) 
\[F(z)=q\cdot \prod_{k=1}^\infty  (1-q^k) \cdot \prod_{k=1}^\infty   (1-q^{23k}).\] It can also be written in terms of $\Theta$-series as 
\[\frac{1}{2} \sum_{x,y \in \mathbb Z}  \ \ q^{x^2+xy+6y^2} - \frac{1}{2} \sum_{x,y \in \mathbb Z} \ \  q^{2x^2+6xy+3y^2}.\]

The $S_3$-extension $K/\mathbb Q$  is also the field cut out by the Galois representation associated to the Ramanujan $\Delta$-function mod~$23$. This is explained by the congruence
\[ \Delta(q) = q \cdot \prod_{k=1}^\infty (1-q^k)^{24} \equiv q \cdot \prod_{k=1}^\infty  (1-q^k) \cdot \prod_{k=1}^\infty   (1-q^{23k}) \pmod{23}.\]

\subsection*{$n=4$} (See~\cite[\S5.4 and corresponding notes]{Jordan}.) For degree $n=4$ the picture to relate $N_p(f_4)$ to a weight one modular form is more complicated. Serre observes that:
\begin{enumerate}[(i)]
\item The $S_4=\PGL_2(\FF_3)$-extension is embedded in $\tilde K=K(\sqrt{7-4x^2})$, where $K$ is the splitting field of~$f_4$ over~$\QQ$ and $x$ is a root of $f_4(X)$,  of degree 2 over $K$ and the resulting Galois group is isomorphic to $\GL_2(\FF_3)$, which embeds in $\GL_2(\ZZ[\sqrt{-2}])$.  Furthermore   the field $\tilde K$ turns out to be the maximal unramified extension of the quadratic field $E=\QQ(\sqrt{-283})$.

\item From this one gets a 2-dimensional odd representation $\rho:G_\QQ \to GL_2(\CC)$ with fixed field $\tilde K$ such that
\[\rho \otimes \rho=\epsilon\oplus \theta_4^\st,\]
factoring through a representation of $\Gal(K/\QQ) = S_4$, where $\epsilon$ is the sign character of a permutation.
The representation $\rho$ arises by the Deligne-Serre construction from a modular form $f \in S_1(\Gamma_0(283),\chi)$ with $\chi$ the order 2 Dirichlet character of conductor 283. This also gives  $N_p(f)-1= a_p(f)^2- \epsilon(p)$.
\end{enumerate}

\subsection*{$n=5$} 
In this article, we look at the $n=5$ case of the question and try to respond to the gauntlet implicitly thrown down by Serre:
\begin{quote}
{\it The case $n \geq 5$. Here the only known result seems to be that  $f_n = x^n -x-1$  is irreducible (Selmer [15]) and that its Galois group is the symmetric group $S_n$. No explicit connection with modular forms (or modular representations) is known, although some must exist because of the Langlands program.}
\end{quote}

The paper  partly arose from graduate courses one of us (CBK) has taught at UCLA partly  based on Serre’s paper \cite{Jordan} in which he suggested that students try and tackle Serre’s challenge. 

We propose two approaches to answer Serre's challenge.
The first one is computational in nature. The key point is that we explore the exceptional isomorphism $S_5 \cong \PGL_2(\FF_5)$ and, instead of $K_{f_5}$, work with a degree~$6$ polynomial~$g \in \QQ[X]$ on which the Galois group acts via the natural action of $\PGL_2(\FF_5)$ on the projective line $\PP^1(\FF_5)$, leading to an `exotic' embedding of $S_5$ into~$S_6$.
Serre pointed out to us that in classical terms, this was called the {\em sextic resolvent} and that this `exotic' $S_5$ may also be viewed as the image of the standard $S_5$ under an outer automorphism of $S_6$.

Explicit class field theory then allows us to explicitly solve a Galois embedding problem for the group $C_{4}\cdot_{6} S_{5} = 4_{-}\PGL_2(\FF_5)$ (in notation of Tim Dokchiter's project GroupNames~\cite{GroupNames} and Quer's article~\cite{Quer}, respectively). That group can be characterised as the unique central extension of $\PGL_2(\FF_5)$ by $C_4$ which restricts to the unique non-trivial central extension of $\PSL_2(\FF_5)$ by $C_4$ and which is not $\GL_2(\FF_5)$. We solve the embedding problem by computing a polynomial $h \in \QQ[X]$ of degree~$48$ describing a cyclic $C_8$ extension of $K_g = \QQ[X]/(g(X))$. It corresponds to a subgroup of $C_{4}\cdot_{6} S_{5}$ which is isomorphic to the dihedral group~$D_5$.
An explicit polynomial $h(X)$ is included in~\S\ref{subsec:h}.

This leads to the following result, linking $f_5$ to a modular form~$F$ of weight one in characteristic~$5$. Its Hecke eigenvalues at primes $p \neq 19,151$ can be explicitly computed from the polynomial~$h(X)$, for instance, using Magma~\cite{Magma}.

\begin{theorem}\label{thm:F}
There is a Hecke eigenform $F$ in $S_{1}(19\cdot 151^{2}, \chi_{-19},\Fbar_5)$, the space of weight one cuspidal Katz modular forms\footnote{Katz modular forms were defined by Katz in~\cite{KatzPadicProp}. A comprehensive account of them is given in~\cite{Goren} and a short summary can be found in~\cite{EdixSC}.} of level $19\cdot 151^{2}$ and Dirichlet character $\chi_{-19}$ corresponding to $\QQ(\sqrt{-19})/\QQ$ enjoying the following properties.
The attached Galois representation
\[\rho:G_\QQ:= \Gal(\Qbar/\QQ) \rightarrow \GL_{2}(\Fbar_5)\]
has conductor $19\cdot 151^{2}$, its image is isomorphic to $C_{4}\cdot_{6} S_{5} = 4_{-}\PGL_2(\FF_5)$, its projectivisation
\[\rho^\proj:G_\QQ \xrightarrow{\rho} \GL_{2}(\Fbar_5) \to \PGL_2(\Fbar_5)\]
has image $\PGL_{2}(\FF_{5})\cong S_{5}$ and $\ker(\rho^\proj)$ is the absolute Galois group of~$K$, the splitting field of~$f_5$.
The modular form $F$ is not the reduction of any holomorphic modular form of weight one in any level.

Moreover, the restriction of $\rho$ to the absolute Galois group $G_E$ of $E=\QQ(\sqrt{19 \cdot 151})$ is unramified at~$19$, but ramifies at $151$. However, there is a character $\delta$ of~$G_E$ such that $\rho|_{G_E}\otimes \delta$ is unramified at all finite places.
\end{theorem}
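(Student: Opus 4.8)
The plan is to reduce the three assertions to a computation of the image of inertia, using that $\rho^\proj$ cuts out $K$, that the conductor is $19\cdot 151^2$, and that $\det\rho=\chi_{-19}$. First I would pin down $\rho$ on inertia over $\QQ$. Since $d_{f_5}=19\cdot 151$ is squarefree, the analysis recalled in the introduction shows that for $p\in\{19,151\}$ one has $f_5\equiv(X-a)^2 g(X)\bmod p$ with $g$ separable and coprime to $(X-a)$, so the inertia subgroup $I_p\subset\Gal(K/\QQ)=S_5$ is generated by a single transposition and $\rho^\proj(I_p)$ has order $2$ in $\PGL_2(\FF_5)$; in particular the ramification at both primes is tame. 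The conductor then fixes $\rho|_{I_p}$ on the underlying space $V=\Fbar_5^2$: at $19$ the exponent $1$ gives $\dim V^{I_{19}}=1$, and since $\det\rho=\chi_{-19}$ is the ramified quadratic character at $19$, the group $\rho(I_{19})$ is generated by $\mat{1}{0}{0}{-1}$; at $151$ the exponent $2$ gives $V^{I_{151}}=0$, while $\det\rho$ is unramified at $151$, so $\rho(I_{151})$ is generated by $\mat{\alpha}{0}{0}{\alpha^{-1}}$ for some $\alpha\in\FF_5^\times$ of order $4$ (the inertia image is larger than its projective image because $\rho$ is a lift through the central $C_4$).

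Next I would restrict to $G_E$. As both $19$ and $151$ ramify in $E=\QQ(\sqrt{19\cdot 151})$, for $p\in\{19,151\}$ the completion $E_\fp/\QQ_p$ is ramified quadratic, so on tame inertia $I_\fp$ is the unique index-two subgroup of $I_p$. Restricting the order-two character occurring at $19$ to this subgroup makes it trivial, whence $\rho|_{I_v}=\id$ and $\rho|_{G_E}$ is unramified at the prime $v\mid 19$; restricting the order-four character at $151$ to the index-two subgroup replaces $\alpha$ by $\alpha^2=-1$, whence $\rho|_{I_w}=-\id$ and $\rho|_{G_E}$ is ramified, through the scalar $-1$, at the prime $w\mid 151$. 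Because $\rho$ is unramified over $\QQ$ away from $\{19,151\}$ and each of these primes is totally ramified in $E$ with a single prime above it, this proves the first two assertions and shows moreover that $\rho|_{G_E}$ is unramified at every finite place except $w$, where inertia acts by $-\id$. This is compatible with Kondo's theorem that $K/E$ is unramified, which a priori already forces $\rho(I_\fp)$ to be scalar at every finite $\fp$.

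It remains to produce $\delta\colon G_E\to\Fbar_5^\times$ with $\rho|_{G_E}\otimes\delta$ unramified at all finite places; by the previous paragraph it suffices that $\delta|_{I_w}$ be the order-two character (value $-1$ on the tame generator) and that $\delta$ be unramified at all other finite places, arbitrary ramification at the two real places of $E$ being allowed. By class field theory such a $\delta$ is a quadratic ray class character of $E$ of conductor dividing $w\cdot\infty_1\infty_2$ and ramified at $w$. Writing the corresponding piece of the narrow ray class group as $(\calO_E/w)^\times\times\{\pm 1\}^2$ modulo the image of $\calO_E^\times=\{\pm 1\}\times\langle\epsilon\rangle$, existence amounts to choosing sign characters $a,b$ at the two real places so that the nontrivial quadratic character $\chi_w$ of $(\calO_E/w)^\times\cong\FF_{151}^\times$ is killed on the units. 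On $-1$ this reads $\bigl(\tfrac{-1}{151}\bigr)\,a(-1)\,b(-1)=1$, which since $151\equiv 3\pmod 4$ is solved by ramifying at exactly one real place; the only remaining constraint, coming from $\epsilon$, is that $\epsilon$ (normalised totally positive) be a square modulo the prime above $151$.

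The main obstacle is precisely this last point, which is not formal. Because $19\equiv 151\equiv 3\pmod 4$ both ramify in $E$, the element $-1$ fails to be a local norm at each of them, so there is no unit of norm $-1$ and $N_{E/\QQ}(\epsilon)=+1$; one therefore cannot shortcut the argument using a unit of negative norm and must check the quadratic residue condition on $\epsilon$ directly. I would settle it by computing the fundamental unit of $\QQ(\sqrt{2869})$ and evaluating the relevant Legendre symbol, or equivalently by computing the ray class group of $E$ of modulus $w\cdot\infty_1\infty_2$; this is the single step that depends on the specific arithmetic of $E$ rather than on formal inertia bookkeeping, and it is the crux of the statement.
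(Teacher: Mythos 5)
There is a genuine gap: your proposal proves only the ``Moreover'' clause of the theorem, after assuming precisely the assertions that constitute its core. The statement asserts the \emph{existence} of the eigenform $F$ and of a representation $\rho$ with image $C_{4}\cdot_{6}S_{5}$, conductor $19\cdot 151^{2}$, determinant $\chi_{-19}$, and projectivisation cutting out $K$; you take all of this as given. In the paper these facts are the substance of the proof: $\rho$ is obtained by explicitly solving the central embedding problem $1\to C_4\to \tilde{G}\to \PGL_2(\FF_5)\to 1$ (working with the sextic resolvent $g$, producing a degree-$48$ polynomial $h$ by explicit class field theory inside a ray class field of conductor $151$, and ruling out the alternative extension $\GL_2(\FF_5)$ because no order-$4$ Dirichlet character ramified only at $19$ and $151$ exists); the conductor $19\cdot 151^2$ and the identity $\det\circ\rho=\chi_{-19}$ are then \emph{deduced} from this construction (your argument runs in the opposite direction, reading inertia off an assumed conductor, which is fine logically but leaves the conductor itself unproved); and the form $F$ is produced by invoking Serre's Modularity Conjecture (Khare--Wintenberger) together with Edixhoven's results on the optimal weight, applied to $\rho$. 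You also never address the assertion that $F$ is not the reduction of any holomorphic weight-one form; the paper's one-line argument is that $S_5$ is not a subquotient of $\PGL_2(\CC)$, so $\rho$ admits no Artin lift. Without these inputs there is no $F$, and nothing to restrict to $G_E$.

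The part you do treat is essentially correct and follows the same mechanism as the paper: inertia at $19$ acts through $\mat{1}{0}{0}{-1}$ and at $151$ through $\mat{\alpha}{0}{0}{\alpha^{-1}}$ with $\alpha$ of order $4$, so passing to the unique index-two subgroups over $E$ gives triviality at $19$ and the scalar $-1$ at $151$, exactly as in the paper's computation with the generator conjugate to $\mat{0}{2\zeta}{2\zeta^{-1}}{0}$. But for the existence of $\delta$ you stop at a reduction: you correctly translate it into the condition that the (totally positive) fundamental unit of $E$ be a square modulo the prime above $151$, and you correctly observe this cannot be finessed since $N_{E/\QQ}(\epsilon)=+1$, yet you explicitly defer the verification (``I would settle it by computing\dots''). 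That verification is not optional -- it is the arithmetic fact on which the clause rests, and the paper supplies it: the ray class group of $E$ of conductor $\fp\infty_1$ is (computed to be) cyclic of order $150$, whose quadratic quotient furnishes $\delta$, necessarily ramified at $\fp$ because the ray class group of conductor $\infty_1$ alone is trivial. So as written, the proposal is a correct reduction of the final clause, not a proof of the theorem.
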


The second approach consists in showing the existence of a cuspidal Hilbert eigenform of parallel weight $1$ defined over the real quadratic field $E$, denoted by~$G$, such that a twist of its Asai transfer to $\GL_{4}(\AAA_\QQ)$ is attached to an Artin representation which factors through~$\theta_5^\st$.
Let us record for completeness that the conductor of $\theta_5^\st$ equals $19\cdot 151$.\footnote{This follows because the only ramified primes in $K/\QQ$ are $19$ and $151$ and, as $K/E$ is unramified at all finite places, the inertia groups at both these primes are cyclic of order~$2$. Moreover, they are generated by a transposition since they are not contained in~$A_5$. As a consequence they fix a $3$-dimensional subspace of vector space underlying~$\theta_5^\st$, leading to the claimed conductor.}
This uniquely identifies the representation as \cite[Artin representation 4.2869.5t5.b.a]{lmfdb}. 

We prove the following general result, which slightly improves on a theorem of Calegari~\cite{Cal}. The improvement results from using stronger modularity results due to Pilloni and Stroh~\cite{Pil} than were available when \cite{Cal} was written.

\begin{theorem}\label{thm:Asai}
Let $K/\QQ$ be a Galois extension with Galois group $\Gal(K/\QQ)\cong S_{5}$ and let $E$ be the subfield of $K$ fixed by the subgroup of $\Gal(K/\QQ)$ isomorphic to $A_{5}$. We assume that $K$ is totally imaginary and $E$ is real.

\begin{enumerate}[(a)]
\item \label{thm:Asai:a} There is a Hilbert modular eigenform $G$ over~$E$ of parallel weight one with attached Galois representation $\eta:G_E \to \GL_2(\CC)$ and there is a character $\chi: G_\QQ \to \CC^\times$ such that
\[ \theta_5^\st \cong \Asai_{G_E}^{G_\QQ}(\eta) \otimes \chi.\]

\item \label{thm:Asai:b} Let $\Pi=\Pi_{\infty}\otimes \bigotimes_{p}\Pi_{p}$ be the automorphic form on $\GL_{4}(\AAA_\QQ)$ obtained by twisting the Asai transfer of $G$ from $E$ to~$\QQ$ by the Hecke character corresponding to~$\chi$. Then the L-function of~$\Pi$ coincides with that of $\theta_5^\st$ at all but finitely many primes~$p$:
\[ L_{p}(s,\Pi)^{-1}=\det(I-\theta_5^\st(\Frob_{p})p^{-s}). \]
\end{enumerate}
\end{theorem}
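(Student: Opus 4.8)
The plan is to realise the four\-dimensional representation $\theta_5^\st$ as a twist of the Asai transfer of a two\-dimensional representation $\eta$ of $G_E$, and then to invoke modularity of $\eta$. The backbone is the exceptional isomorphism $\Gal(K/\QQ)\cong S_5\cong\PGL_2(\FF_5)$, under which $\Gal(K/E)\cong A_5\cong\PSL_2(\FF_5)$. The double cover $\SL_2(\FF_5)\cong 2.A_5$ carries two faithful irreducible two\-dimensional representations, and a character computation shows that their tensor product is the unique four\-dimensional irreducible representation of $A_5$, which is $\theta_5^\st|_{G_E}$. The key point is that these two representations are interchanged by the outer automorphism of $A_5$ induced by conjugation by any $\sigma\in S_5\setminus A_5$, which fuses the two conjugacy classes of $5$\-cycles. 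Writing $\eta$ for one of them, pulled back to $G_E$, its conjugate $\eta^\sigma$ is the other, so that
\[ \theta_5^\st|_{G_E}\cong\eta\otimes\eta^\sigma=\Asai_{G_E}^{G_\QQ}(\eta)|_{G_E}. \]
Note that were $\eta^\sigma$ isomorphic to $\eta$, the tensor product would split as $\Sym^2\oplus\wedge^2$ and could not be the four\-dimensional irreducible module; the nontriviality of the outer automorphism on these representations is therefore essential.

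First I would produce $\eta$ itself. The composite $G_E\twoheadrightarrow A_5\hookrightarrow\PGL_2(\CC)$ is a projective representation with icosahedral image; by Tate's theorem the obstruction in $H^2(G_E,\CC^\times)$ to lifting it vanishes, yielding $\eta\colon G_E\to\GL_2(\CC)$, well defined up to twist by a character of $G_E$. Since $\theta_5^\st|_{G_E}$ is irreducible and, being the restriction of a $G_\QQ$\-representation, is $\sigma$\-invariant, the two extensions $\theta_5^\st$ and $\Asai_{G_E}^{G_\QQ}(\eta)$ of this common irreducible module differ by a character of $\Gal(E/\QQ)$; absorbing this, together with the ambiguity in the lift of $\eta$ and the transfer of the twisting character, yields a finite\-order character $\chi$ of $G_\QQ$ with $\theta_5^\st\cong\Asai_{G_E}^{G_\QQ}(\eta)\otimes\chi$. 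The admissible $\chi$ is constrained by matching determinants, since $\det\theta_5^\st=\sgn$ corresponds to $E/\QQ$ and $\det\bigl(\Asai_{G_E}^{G_\QQ}(\eta)\otimes\chi\bigr)=\det\Asai_{G_E}^{G_\QQ}(\eta)\cdot\chi^4$. The parity needed for parallel weight one is automatic: because $K$ is totally imaginary while $E$ is totally real, each complex conjugation $c_v\in G_E$ is an involution acting nontrivially on $K$, so $\eta(c_v)^2=I$ while its image in $\PGL_2(\CC)$ has order two; hence $\eta(c_v)$ has eigenvalues $\{1,-1\}$ and $\det\eta(c_v)=-1$, making $\eta$ totally odd.

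The hard part will be the modularity of $\eta$, which is exactly part~(a). Because $A_5$ is not solvable, cyclic base change and the Langlands--Tunnell method do not apply, and one genuinely needs an automorphy lifting argument in the icosahedral case over the totally real field $E$. The strategy is to first exhibit a mod~$p$ Hilbert modular form of parallel weight one whose attached residual representation agrees with $\eta\bmod p$ for a suitable prime, and then to lift it to characteristic zero. It is precisely this last step---promoting a residual weight one form to a characteristic zero parallel weight one Hilbert eigenform---where I would substitute the geometric modularity results of Pilloni and Stroh~\cite{Pil} for the weaker inputs available to Calegari~\cite{Cal}; this is the source of the claimed improvement. The output is the Hilbert eigenform $G$ over $E$ of parallel weight one with $\eta$ as its attached representation, completing~(a).

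Finally, part~(b) is a matter of automorphic functoriality. The Asai (twisted tensor) transfer of the cuspidal automorphic representation of $\GL_2(\AAA_E)$ attached to $G$ is automorphic on $\GL_4(\AAA_\QQ)$ by the work of Ramakrishnan and Krishnamurthy; twisting by the Hecke character corresponding to $\chi$ produces $\Pi$. At every prime $p$ unramified in $K$ and for $G$, the Satake parameters of $\Pi_p$ are computed from those of $G$ by the explicit recipe for the Asai lift, and by local--global compatibility they coincide with the tensor induction of the parameters of $\eta$. Combined with the identification $\theta_5^\st\cong\Asai_{G_E}^{G_\QQ}(\eta)\otimes\chi$ from part~(a), this gives
\[ L_p(s,\Pi)^{-1}=\det\bigl(I-\theta_5^\st(\Frob_p)p^{-s}\bigr) \]
at all but finitely many~$p$, as claimed.
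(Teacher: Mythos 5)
Your overall architecture matches the paper's: a two-dimensional representation $\eta$ of $G_E$ projectively cutting out $K/E$, total oddness from $K$ being totally imaginary (your eigenvalue argument for the non-scalar involution is correct), modularity from Pilloni--Stroh, and Ramakrishnan's Asai functoriality for part~(b); the finite group theory you quote (the two $2$-dimensional representations of $2.A_5$ are swapped by the outer automorphism and tensor to the $4$-dimensional irreducible representation) is also the heart of the paper's Proposition~\ref{prop:asai}. The genuine gap lies in how you pass from this finite-group picture to an isomorphism of Galois representations. Your displayed identity $\theta_5^\st|_{G_E}\cong\eta\otimes\eta^\sigma$ is false for a general Tate lift $\eta$: Tate's theorem does not produce a lift of $G_E\twoheadrightarrow A_5\hookrightarrow\PGL_2(\CC)$ factoring through $2.A_5$ --- that embedding problem can be obstructed, which is exactly why the paper (following Quer) allows kernels $C_{2^r}$. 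Consequently $\eta|_{G_K}=\lambda\cdot I_2$ for a generally nontrivial character $\lambda$ of $G_K$, hence $(\eta\otimes\eta^\sigma)|_{G_K}=\lambda\lambda^\sigma\cdot I_4$, whereas $\theta_5^\st$ is trivial on $G_K$. What is true is only $\eta\otimes\eta^\sigma\cong\theta_5^\st|_{G_E}\otimes\nu$ for some character $\nu$ of $G_E$, and the crux of part~(a) is to show that $\nu$ is the restriction of a character of $G_\QQ$, so that it can be absorbed into~$\chi$.

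Your proposed remedy --- absorbing ``the ambiguity in the lift of $\eta$ and the transfer of the twisting character'' --- cannot supply this step: replacing $\eta$ by $\eta\otimes\mu$ multiplies $\nu$ by $\mu\cdot\mu^\sigma$, which is already the restriction to $G_E$ of a character of $G_\QQ$ (namely $\mu$ composed with the transfer map), so the class of $\nu$ in $\Hom(G_E,\CC^\times)$ modulo restrictions from $G_\QQ$ is independent of every choice available to you; its vanishing is a statement that must be proven, not a normalisation. It can be proven along lines compatible with your set-up: $\nu$ is $\Gal(E/\QQ)$-invariant, because $\eta^{\sigma^2}\cong\eta$ and because $\theta_5^\st|_{G_E}$, factoring through the perfect group $A_5$, admits no nontrivial self-twist; and every invariant character of $G_E$ extends to $G_\QQ$ since $H^2(\Gal(E/\QQ),\CC^\times)=0$. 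Only after this does your ``two extensions of a common irreducible differ by a character of $\Gal(E/\QQ)$'' argument apply. The paper sidesteps the entire issue by solving the embedding problem over $\QQ$ first (Proposition~\ref{prop:quer}), necessarily in characteristic~$5$ because $S_5$ does not embed into $\PGL_2(\CC)$: then $\eta$, $\eta^\sigma$ and $\chi$ all factor through the finite group $\Gal(\tilde{K}/\QQ)\cong 2^r_{-}\PGL_2(\FF_5)$, and the twisting character is constructed purely group-theoretically (Lemma~\ref{lem:asai}, Proposition~\ref{prop:asai}). A lesser inaccuracy: Pilloni--Stroh's Th\'eor\`eme~0.3 is not a residual-to-characteristic-zero lifting device inside a modularity strategy you still have to run; it asserts outright that every totally odd irreducible two-dimensional complex representation of $G_E$, with $E$ totally real, comes from a parallel weight one Hilbert eigenform, so once oddness of $\eta$ is established one simply applies it, as the paper does.
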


We remark that Calegari's original theorem depended on the local condition that the Frobenius element at $5$ should be conjugate to the double transposition $(1,2)(3,4)$. We note that this is not satisfied if $K$ is the splitting field of $f_{5}$ because $f_5$ is irreducible modulo~$5$, whence the Frobenius element is a $5$-cycle. We further point out that it was also remarked in \cite{Dwyer} that this condition is superfluous.

\medskip
We now elaborate more on how our results respond to Serre's challenge, which we interpret in various ways:
\begin{enumerate}[(1)]
\item\label{int:1} Vaguely, we see it as asking about a relation between the polynomial~$f_{5}$ and `automorphic objects'.
\item\label{int:2} More precisely, we see it as relating conjugacy classes of unramified Frobenius elements in $\Gal(K/\QQ)$ to Hecke eigenvalues of `automorphic objects'.
\item\label{int:3} We also consider the original question about expressing $N_p(f_5)$ explicitly via an automorphic form.
\end{enumerate}

The vague interpretation \eqref{int:1} can be answered affirmatively in a number of ways and in different characteristics.
For instance, in characteristic~$5$, the projective Galois representation attached to the weight one form $F$ over~$\Fbar_5$ cuts out the field $K/\QQ$. This is also true for the Hilbert modular form~$G$ over~$E$ in characteristic~$0$.
One can also reduce $G$ modulo (a prime above)~$2$. The image of the attached Galois representation will be $\SL_2(\FF_4) \cong A_5$ and $K$ will again be the field cut out by it.
In that sense, $F$, $G$ and the reduction of $G$ modulo~$2$ are automorphic objects giving rise to~$K/\QQ$, and their Galois representations `control' $K/\QQ$ and, hence, the arithmetic of the polynomial~$f_{5}$.

Concerning the more precise interpretation~\eqref{int:2}, from Theorem~\ref{thm:F} we obtain the following result.

\begin{corollary}\label{cor:int2}
Let $F$ be the weight one Katz modular eigenform over~$\Fbar_5$ from Theorem~\ref{thm:F}.
For any prime~$p$, denote by $a_p(F)$ the eigenvalue of the Hecke operator $T_p$ on~$F$.

Then for every prime $p \nmid 19 \cdot 151$, we have the formula
\[ N_p(f_5) \equiv 1 + \left( \frac{19 \cdot 151}{p} \right) \cdot \left( \left( \frac{-19}{p} \right) a_p(F)^2 + \left( \frac{19 \cdot 151}{p} \right) - 1 \right) \bmod{5}.\]

Moreover, for every prime $p \nmid 19 \cdot 151$, the triple
$(a_p(F),\left( \frac{-19}{p} \right), \left( \frac{-151}{p} \right))$
uniquely determines the conjugacy class of any Frobenius element at~$p$ in $\Gal(K/\QQ) \cong S_5$ with the exception that $5$-cycles cannot be distinguished from the identity.
\end{corollary}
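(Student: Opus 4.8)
The plan is to reduce the entire statement to a computation of class functions on $\Gal(K/\QQ)\cong S_5\cong\PGL_2(\FF_5)$. The starting point is the identity $N_p(f_5)=\tr(\theta_5^{\st}(\Frob_p))+1$ recorded in the introduction, valid for $p\nmid 19\cdot 151$. From Theorem~\ref{thm:F} I would extract three facts about $F$ and its representation $\rho$: first, $a_p(F)=\tr(\rho(\Frob_p))$; second, since $F$ has weight one and nebentype $\chi_{-19}$, its determinant is $\det\rho=\chi_{-19}$, whence $\det(\rho(\Frob_p))=\left(\frac{-19}{p}\right)$; and third, $\rho^{\proj}$ has image $S_5$ with kernel $G_K$, so any quantity invariant under scaling $g\mapsto\lambda g$ is a genuine class function on $S_5$. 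The relevant such quantity is $\tr(g)^2/\det(g)$; from $\tr(g)^2=\tr(\Sym^2 g)+\det(g)$ one gets $\tr(g)^2/\det(g)=\tr(\ad\,g)+1$, where $\ad=\Sym^2\otimes\det^{-1}$ is the three dimensional representation of $\PGL_2$. Finally I would identify the sign: since $E=\QQ(\sqrt{d_{f_5}})$ with $d_{f_5}=2869=19\cdot 151$ is the field fixed by $A_5=\ker(\sgn)$, one has $\sgn(\Frob_p)=\left(\frac{19\cdot 151}{p}\right)$.

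The heart of the proof is the congruence of $\FF_5$-valued class functions
\[
\tr(\theta_5^{\st}) \equiv \sgn\cdot\frac{\tr^2}{\det}(\rho)+1-\sgn \pmod 5,
\]
equivalently $\theta_5^{\st}\equiv(\ad\rho\otimes\sgn)\oplus\mathbf 1$ on semisimplifications. To prove it I would use the exceptional isomorphism to match each cycle type in $S_5$ with an element type in $\PGL_2(\FF_5)$ (the identity; unipotents $\leftrightarrow$ $5$-cycles; the split torus $\leftrightarrow$ $2$- and $4$-cycles; the non-split torus $\leftrightarrow$ $3$-cycles and order-$6$ elements), compute $\tr^2/\det=r+r^{-1}+2$ from the eigenvalue ratio $r$ in each case, and read off $\sgn$ from whether $\det$ is a square. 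This yields the table
\[
\begin{array}{c|ccccccc}
\text{class} & e & (12) & (12)(34) & (123) & (1234) & (123)(45) & (12345)\\
\hline
\tr(\theta_5^{\st}) & 4 & 2 & 0 & 1 & 0 & -1 & -1\\
\tr^2/\det & 4 & 0 & 0 & 1 & 2 & 3 & 4\\
\sgn & + & - & + & + & - & - & +
\end{array}
\]
from which the congruence is checked class by class. I would stress the characteristic~$5$ phenomenon at the $5$-cycle: there the unipotent has $\tr^2/\det=4$, exactly as for the identity, while $\tr(\theta_5^{\st})=-1\equiv 4\pmod 5$. Over $\ZZ$ the two sides genuinely differ, and it is only after reduction mod~$5$ that the formula survives; this is simultaneously why the result is a congruence and the source of the exceptional collision in the second assertion.

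With the congruence in hand the first formula is immediate: substituting $\tr^2/\det(\rho(\Frob_p))=\left(\frac{-19}{p}\right)a_p(F)^2$ and $\sgn(\Frob_p)=\left(\frac{19\cdot 151}{p}\right)$ and adding~$1$ gives
\[
N_p(f_5)\equiv 1+\left(\frac{19\cdot 151}{p}\right)\left(\left(\frac{-19}{p}\right)a_p(F)^2+\left(\frac{19\cdot 151}{p}\right)-1\right)\pmod 5.
\]
For $p=5$, which is included in the statement, I would invoke that the Galois representation attached to a weight one form is unramified at the residue characteristic, so $\Frob_5$ and $a_5(F)=\tr(\rho(\Frob_5))$ are defined and the same computation applies. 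For the second assertion, note that the triple $(a_p(F),\left(\frac{-19}{p}\right),\left(\frac{-151}{p}\right))$ determines the pair $(\tr^2/\det,\sgn)=\big(\left(\frac{-19}{p}\right)a_p(F)^2,\left(\frac{19\cdot 151}{p}\right)\big)$ via $\left(\frac{-151}{p}\right)=\left(\frac{19\cdot 151}{p}\right)\left(\frac{-19}{p}\right)$; the table shows this pair separates all seven conjugacy classes except that $e$ and the $5$-cycle $(12345)$ share the value $(4,+)$, which is precisely the asserted exception.

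The main obstacle is establishing the central congruence, since it is a statement in the representation theory of $S_5$ in its defining characteristic~$5$: the object $\theta_5^{\st}$ is irreducible in characteristic zero but becomes reducible mod~$5$, acquiring the trivial representation and $\ad\rho\otimes\sgn$ as constituents, and the matching of values on the unipotent (i.e.\ $5$-cycle) class requires Brauer-character bookkeeping rather than ordinary character theory. The remaining care points are pinning down the conjugacy-class dictionary under $S_5\cong\PGL_2(\FF_5)$ and verifying that $\det\rho$ is exactly $\chi_{-19}$ and not a twist, both of which feed directly into the numerical table.
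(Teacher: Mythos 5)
Your proposal is correct and follows essentially the paper's own route: the paper likewise reduces everything to the relations $a_p(F)=\tr(\rho(\Frob_p))$, $\det\circ\rho=\chi_{-19}$ and $\sgn=\left(\frac{19\cdot 151}{\cdot}\right)$, and then verifies both the congruence and the separation of conjugacy classes class-by-class via its Tables~1 and~2, which record exactly the data you recompute (your scale-invariant $\tr^2/\det$ is the paper's list of $(\tr,\det)$ pairs of lifts in disguise). One harmless slip: under $S_5\cong\PGL_2(\FF_5)$ the transpositions correspond to the \emph{non-split} involutions and the double transpositions to the split ones (not as in your parenthetical dictionary), but since both types have trace~$0$ this affects no entry of your table and hence nothing in the argument.
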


The exception is due to the fact that order~$5$ elements are non-semisimple in characteristic~$5$.
Since the determinant of $\rho$ is $\chi_{-19} = \left( \frac{-19}{\cdot} \right)$, we recognise that 
$\left( \frac{-19}{p} \right) a_p(F)^2 = \frac{a_p(F)^2}{\left( \frac{-19}{p} \right)}$ indeed only depends on the projective representation $\rho^\proj$.

\medskip
Concerning \eqref{int:3}, Serre's original question is completely answered by the following corollary of Theorem~\ref{thm:Asai}, which also gives the strongest form of~\eqref{int:2}.

\begin{corollary}\label{cor:Asai}
Let $K$ be the splitting field of~$f_5$ over~$\QQ$.
There exists a Hilbert modular eigenform~$G$ over~$E=\QQ(\sqrt{19 \cdot 151})$ of parallel weight one, the $T_\fp$-eigenvalues of which are denoted $a_\fp(G)$, such that for every prime~$p \nmid 19 \cdot 151$, we have the formula
\[ N_p(f_5) = 1 + \left(\frac{-151}{p}\right) \cdot \prod_{\fp \mid p} a_{\fp}(G),\]
where $\fp$ runs through the primes of~$E$ above~$p$.

Moreover, for every prime $p \nmid 19 \cdot 151$, the triple $((a_\fp(G))_{\fp \mid p},\left( \frac{-19}{p} \right), \left( \frac{-151}{p} \right))$ uniquely determines the conjugacy class of any Frobenius element at~$p$ in $\Gal(K/\QQ) \cong S_5$.
\end{corollary}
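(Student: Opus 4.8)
The plan is to deduce the corollary from Theorem~\ref{thm:Asai} applied to the splitting field $K$ of $f_5$. First I would verify the hypotheses: every complex conjugation of $K/\QQ$ is a double transposition, hence a nontrivial element of $S_5$, so $K$ has no real place and is totally imaginary, while $E=\QQ(\sqrt{19\cdot 151})$ is real because $2869>0$. Theorem~\ref{thm:Asai} then provides a parallel weight one Hilbert eigenform $G$ over $E$ with Galois representation $\eta$ and a character $\chi$ satisfying $\theta_5^\st\cong\Asai_{G_E}^{G_\QQ}(\eta)\otimes\chi$; since $G$ has parallel weight one, Deligne--Serre for Hilbert forms gives $a_\fp(G)=\tr\eta(\Frob_\fp)$ at every $\fp$ where $\eta$ is unramified.

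Next I would unwind the twisted tensor trace at a prime $p\nmid 19\cdot 151$. If $p=\fp_1\fp_2$ splits in $E$, then $\Frob_p\in G_E$ and $\tr\Asai(\eta)(\Frob_p)=\tr\eta(\Frob_{\fp_1})\,\tr\eta(\Frob_{\fp_2})=a_{\fp_1}(G)\,a_{\fp_2}(G)$; if $p$ is inert, with $\fp=p\calO_E$, then $\Frob_p$ maps to the nontrivial element of $\Gal(E/\QQ)$ and tensor induction gives $\tr\Asai(\eta)(\Frob_p)=\tr\eta(\Frob_p^2)=\tr\eta(\Frob_\fp)=a_\fp(G)$. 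In both cases $\tr\Asai(\eta)(\Frob_p)=\prod_{\fp\mid p}a_\fp(G)$. Combined with $N_p(f_5)=\tr\theta_5^\st(\Frob_p)+1$ from the introduction, this yields $N_p(f_5)=1+\chi(\Frob_p)\prod_{\fp\mid p}a_\fp(G)$.

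The heart of the matter, and the step I expect to be the main obstacle, is to identify $\chi$ as $\left(\frac{-151}{\cdot}\right)$. Self-duality of $\theta_5^\st$ (it is orthogonal and defined over~$\QQ$) together with the essential self-duality of the Asai representation forces $\chi$ to be quadratic, and comparing conductors shows $\chi$ is unramified outside $\{19,151\}$; hence $\chi$ lies in the Klein four-group $\{1,\chi_{-19},\chi_{-151},\chi_{2869}\}$. Determinants and the archimedean place cannot separate these candidates: once $\chi$ is quadratic one has $\chi^4=1$, so $\det\theta_5^\st=\chi_{2869}$ carries no information, and $\theta_5^\st(c)$ and $\Asai(\eta)(c)$ both have eigenvalues $\{1,1,-1,-1\}$ (the latter because $c\in G_E$ and $\eta$ is totally odd), so the sign of $\chi$ at~$\infty$ is invisible. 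The discrimination must come from the two ramified primes. Restricting to inertia at~$19$, where $\theta_5^\st$ is generated by a transposition with eigenvalues $\{1,1,1,-1\}$, and using that $\eta$ is unramified above~$19$ (the analogue for $\eta$ of the statement in Theorem~\ref{thm:F} that $\rho|_{G_E}$ is unramified at~$19$), one checks that $\Asai(\eta)|_{I_{19}}$ acts by the coset-swap with the same eigenvalues $\{1,1,1,-1\}$, forcing $\chi$ unramified at~$19$. Running the same inertial comparison at~$151$, where $\eta$ \emph{is} ramified, shows the swap eigenvalues no longer match those of the transposition, so the correcting sign $\chi|_{I_{151}}$ is nontrivial and $\chi$ is ramified at~$151$. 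Among the four candidates only $\chi_{-151}$ is ramified at~$151$ and unramified at~$19$, whence $\chi=\left(\frac{-151}{\cdot}\right)$ and the displayed formula for $N_p(f_5)$ follows.

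Finally, for the separation statement I would tabulate, for each of the seven conjugacy classes of $S_5$, the pair $(\tr\theta_5^\st,\sgn)$, using $\tr\theta_5^\st=\#\{\text{fixed points}\}-1$. This gives $(4,+),(2,-),(0,+),(1,+),(-1,-),(0,-),(-1,+)$ for the identity, a transposition, a double transposition, a $3$-cycle, a $(3,2)$-element, a $4$-cycle and a $5$-cycle respectively, and these seven pairs are pairwise distinct. From the triple one recovers $\tr\theta_5^\st(\Frob_p)=\left(\frac{-151}{p}\right)\prod_{\fp\mid p}a_\fp(G)$ and $\sgn(\Frob_p)=\det\theta_5^\st(\Frob_p)=\left(\frac{-19}{p}\right)\left(\frac{-151}{p}\right)$, so the pair, and hence the conjugacy class, is determined. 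Unlike the characteristic~$5$ situation of Corollary~\ref{cor:int2}, no exception arises: the genuine complex value of $\prod_{\fp\mid p}a_\fp(G)$ separates the identity (trace~$4$) from a $5$-cycle (trace~$-1$).
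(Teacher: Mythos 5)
Your unwinding of the tensor-induction traces (split/inert cases giving $\prod_{\fp\mid p}a_\fp(G)$) and your final tabulation of the pairs $(\tr\theta_5^\st(\Frob_p),\sgn(\Frob_p))$ over the seven conjugacy classes are correct and coincide with what the paper does via Corollary~\ref{cor:asai} and Table~\ref{table:tr}, including the observation that the identity/$5$-cycle ambiguity of the characteristic~$5$ setting disappears. The genuine gap is exactly in the step you flag as the heart of the matter: the identification $\chi=\chi_{-151}$. All three pillars of your elimination argument are properties of a \emph{particular} lift of $\rho^\proj$, not of the data that Theorem~\ref{thm:Asai} hands you as a black box. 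The proof of Theorem~\ref{thm:Asai}~(a) takes its lift from Quer's Proposition~\ref{prop:quer}, with an arbitrary $r\ge\mu(\rho^\proj)$ and with \emph{no control on the ramification} of $\tilde K/K$. For such a lift: (i) $\chi$ need not be quadratic: by Proposition~\ref{prop:asai}, $\chi|_{C_{2^r}}=(\det\circ\eta|_{C_{2^r}})^{-1}$, which has order $2^{r-1}$, so for $r\ge 3$ the Galois character $\chi$ has order at least~$4$. Your self-duality argument cannot rescue this: since $\eta^\vee\cong\eta\otimes(\det\circ\eta)^{-1}$ and tensor induction turns a twist by $\mu$ into a twist by the transfer character of~$\mu$, self-duality of $\theta_5^\st$ only yields that $\chi^2$ equals the transfer to $G_\QQ$ of $(\det\circ\eta)^{-1}$, which is nontrivial precisely when $r\ge 3$; it does not yield $\chi^2=1$. (ii) ``Comparing conductors'' does not bound the ramification of $\chi$: if $\tilde K/K$ ramifies at some $p\notin\{19,151\}$, inertia at $p$ lands in the centre $C_{2^r}$, so $\Asai(\eta)|_{I_p}$ is a nontrivial scalar character and $\chi|_{I_p}$ is exactly its inverse -- both are ramified at~$p$ while their product $\theta_5^\st$ is not. (iii) The assertion that $\eta$ is unramified above~$19$ is not an ``analogue'' of Theorem~\ref{thm:F} available for free: Theorem~\ref{thm:F} concerns the specific representation built in the computational section from a ray class field of conductor $151\infty_1$, whereas the $\eta$ produced in the proof of Theorem~\ref{thm:Asai} factors through an a priori different~$\tilde K$. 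In short, for a general Quer lift the conclusion $\chi=\chi_{-151}$ is simply false, so no argument using only Theorem~\ref{thm:Asai} can establish it.

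The paper closes exactly this hole by running the argument with the explicit $C_4$-lift of the computational section (so $r=2$, conductor $19\cdot 151^2$, inertia at $19$ of order~$2$ fixing a line, inertia at $151$ of order~$4$): for that lift, Table~\ref{table:ker} identifies the three quadratic characters of $\tilde G=\Gal(\tilde K/\QQ)$ with the fields $E$, $\QQ(\sqrt{-19})$, $\QQ(\sqrt{-151})$, and the defining properties of $\chi$ from Proposition~\ref{prop:asai} (restriction $\det\circ\eta$ on the centre, sign on $2_{-}\PGL_2(\FF_5)$) show $\chi=\det\cdot\sgn=\chi_{-151}$. Your inertial computations at $19$ and $151$ are in fact correct \emph{for that lift}, so your route can be repaired -- but only by making the choice of lift explicit and importing the ramification analysis of the computational section; as written, the quadraticity of $\chi$, the ramification bound on $\chi$, and the unramifiedness of $\eta$ above $19$ are all unjustified, and the second of these is the one that cannot be patched without choosing the lift carefully.
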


We remark that holomorphic cuspidal Hilbert modular forms over~$E$ of parallel weight~$1$ and level~$1$ were constructed by Bryk, who used them to express the square of $N_p(f_5)$ (see \cite[Prop.~6.3.5]{Bryk}).
Bryk's forms are different from the modular form~$G$ of Corollary~\ref{cor:Asai} because the level of~$G$ is non-trivial. However, the Artin representation attached to any of Bryk's forms cuts out an everywhere unramified extension of~$E$. Its projectivisation is the same as that of $G$, namely the one cutting out $K/E$.
This implies that our form $G$ is a twist of one of Bryk's forms by a Hecke character of~$E$.

Finally, as we remarked at the beginning of the introduction, the splitting field $K$ of~$f_n$ is an $A_n$-extension of its quadratic subfield~$E$ that is unramified at all finite places. For $n=3$ and $n=4$, one knows the maximal extension $L/E$ that is unramified at all finite places explicitly.
In both these cases, $L$ is the field cut out by a linear Galois representation with projectivisation corresponding to the $A_n$-extension $K/E$.
This pattern does not continue for $n=5$. Indeed, $K_f(\sqrt{-151})$ has class number~$7$ and $EK_f(\sqrt{-151})$ is an unramified extension of~$E$; its class number is hence divisible by~$7$ (in fact, it equals $21$ under GRH). So, there is a cyclic extension of~$K$ of order~$7$, which is unramified over~$E$, but not accounted for by the Artin representations in question because the orders of their images are coprime to~$7$.

\subsection*{Acknowledgements}
We would like to thank the anonymous referee for useful suggestions improving the exposition of the paper.
We also thank J-P.~Serre for helpful remarks on the manuscript. We also thank J\"urgen Kl\"uners for significantly reducing the size of the polynomial~$h(X)$.
G.W.\ thanks Sara Arias-de-Reyna for very helpful suggestions, Jordi Guàrdia and Enric Nart for helpful communication about maximal orders as well as Frank Calegari for recalling the article \cite{Dwyer} to us.

\section{Certain central extensions and embedding problems}\label{sec:gp}

We start with a very brief outline of the general theory relating lifts of projective representations, Galois embedding problems and group cohomology.

Let $\pi: G \twoheadrightarrow \Gbar$ be a surjective group homomorphism and assume $C=\ker(\pi)$ lies in the centre of~$G$.
We are interested in lifting a {\em projective representation}
\[ \rho^\proj: \Gbar \to \PGL_n(K) \]
to a linear representation $\rho: G \to \GL_n(K)$, where $K$ is any field and $n \in \ZZ_{\ge 1}$.

We will use (continuous) cochains in (continuous) group cohomology, which are for instance explained in~\cite[Chapter~1]{NSW}.
We consider the multiplicative group $K^\times$ as a trivial module for $G$ and~$\Gbar$ and start by associating with $\rho^\proj$ an inhomogeneous $2$-cocycle $\gamma \in \z^2(\Gbar,K^\times)$ as follows:
for each $\gb \in \Gbar$, choose once and for all a lift $\rhotilde(\gb) \in \GL_n(K)$ of $\rho^\proj(\gb)$ such that $\rhotilde(1)=1$ and let
\[ \gamma(\gb,\hb) := \rhotilde(\gb) \cdot \rhotilde(\hb) \cdot  \rhotilde(\gb\hb)^{-1}.\]
One easily checks that $\gamma$ is indeed an inhomogeneous $2$-cocycle, {\it i.e.\ } that it satisfies
$\gamma(\gb\hb,\ib)  \cdot \gamma(\gb,\hb) = \gamma(\hb,\ib) \cdot  \gamma(\gb,\hb\ib)$ for all $\gb,\hb,\ib \in \Gbar$.
Via inflation along~$\pi$ ({\it i.e.\ } by precomposing with~$\pi$), we also consider $\gamma$ as a $2$-cocycle of~$G$, {\it i.e.\ } as an element of $\z^2(G,K^\times)$.
By definition, the inflation of~$\gamma$ is a $2$-coboundary if and only if there is a map of sets $\sigma: G \to K^\times$ such that $\gamma(g,h) = \sigma(g) \cdot \sigma(h) \cdot \sigma(gh)^{-1}$ for $g,h \in G$.
If this is the case, by equating the two expressions for $\gamma$, we have for all $g,h \in G$:
\[\big(\rhotilde(\pi(g))\sigma(g)^{-1}\big) \cdot \big(\rhotilde(\pi(h))\sigma(h)^{-1}\big) = \big(\rhotilde(\pi(gh))\sigma(gh)^{-1}\big),\]
showing that $\rho:G \to \GL_n(K)$ with $\rho(g) = \rhotilde(\pi(g))\sigma(g)^{-1}$ is a linear representation of~$G$ lifting $\rho^\proj$. These arguments can be read backwards, showing that the triviality of the cocycle class in $\h^2(G,K^\times)$ exactly characterises the liftability of $\rho^\proj$ to a linear representation $G \to \GL_n(K)$.

We analyse this a bit further and see that $\sigma|_C$ is a group homomorphism $C \to K^\times$ as $\gamma(c_1,c_2)=1$ for all $c_1,c_2 \in C$.
Moreover, we can replace $\sigma$ by $\sigma \cdot \varphi$ for any character $\varphi: G \to K^\times$.
This leads to twisting $\rho$ by~$\varphi$.
Conversely, if $\sigma': G \to K^\times$ is a map of sets that satisfies the relation $\gamma(g,h)=\sigma'(g)\cdot\sigma'(h)\cdot\sigma'(gh)^{-1}$, then $\sigma \sigma'^{-1}$ is a character, showing that the ambiguity exactly comes from twisting. We summarise this as follows.

\begin{proposition}\label{prop:liftp}
Let $\gamma \in \z^2(\Gbar,K^\times)$ be the $2$-cocycle associated with $\rho^\proj$.
Then the inflation of $\gamma$ is a $2$-coboundary in $\z^2(G,K^\times)$ if and only if $\rho^\proj$ admits a linear lift $\rho:G \to \GL_n(K)$. Different linear lifts are twists of each other by characters $G \to K^\times$.
\end{proposition}

We remark that this can also be elegantly rephrased in the language of group cohomology via the following four terms of the so-called {\em $5$-term exact sequence} (see e.g.\ \cite[(1.6.7)]{NSW})
\[ \Hom(G,K^\times) \xrightarrow{\res} \Hom(C,K^\times) \xrightarrow{\tg} \h^2(\Gbar,K^\times) \xrightarrow{\infl} \h^2(G,K^\times),\]
using that $\h^1 = \Hom$ for trivial modules.
Here $\tg$ is the transgression map, which for a character $\delta:C \to K^\times$ can be explicitly described by
$\tg(\delta)(\pi(g),\pi(h)) = \sigma(g) \cdot \sigma(h) \cdot \sigma(gh)^{-1}$ for $g,h \in G$ and $\sigma: G \to K^\times$ the map defined by $\sigma(c s(\gb)) = \delta(c)$ for $\gb \in \Gbar$ and $c \in C$ where $s: \Gbar \to G$ is any fixed set-theoretic split of~$\pi$ sending $1$ to~$1$.

For Galois representations, we have the following fundamental result of Tate's (see \cite[\S4]{Quer}).

\begin{theorem}\label{thm:Tate} Let $G$ be the absolute Galois group of a global or a local field (non-Archimedean local fields are assumed to have finite residue field) and assume $K$ algebraically closed. Then $\h^{2}(G,K^\times)=1$.
\end{theorem}

Applying the above with an absolute Galois group $G$ as in the theorem and $\Gbar=G$, we see that every projective representation $\rho^\proj: G \to \PGL_n(K)$ admits a (unique up to twisting) lift to a linear representation $\rho: G \to \GL_n(K)$.
Passing to images leads to a central extension
\begin{equation}\label{eq:ce}
1 \to A \to \rho(G) \to \rho^\proj(G) \to 1,
\end{equation}
where $A$ is a subgroup of~$K^\times$, upon identifying $K^\times$ with the group of scalar matrices.
Every central extension corresponds to a $2$-cocycle class (see \cite[(1.2.4)]{NSW}), in this case lying in $\h^2(\rho^\proj(G),A)$. Applying the map
\[ \alpha: \h^2(\rho^\proj(G),A) \to \h^2(\rho^\proj(G),K^\times) \]
induced from the inclusion $A \to K^\times$ returns the $2$-cocycle class attached with~$\rho^\proj$ seen as a representation of its image via inclusion.
We remark explicitly that the map $\alpha$ need not be injective; its kernel is isomorphic to $\Hom(\rho^\proj(G),K^\times/A)/\Hom(\rho^\proj(G),K^\times)$. This means that more than one central extension with kernel~$A$ can be associated with the same projective representation.

Next, we explain how to construct a lift $\rho: G \to \GL_n(K)$ of a given projective representation $\rho^\proj: G \to \PGL_n(K)$ with attached $2$-cocycle class $\gamma \in \h^2(\rho^\proj(G),K^\times)$.
Start with a central extension $\cG$ of $\rho^\proj(G)$ by~$A$, a finite subgroup of~$K^\times$ as in~\eqref{eq:ce}. It corresponds to the class of a cocycle $\delta \in \h^2(\rho^\proj(G),A)$ and we assume $\alpha(\delta)=\gamma$.
We further have by Proposition~\ref{prop:liftp} that the inflation of $\gamma$ to $\h^2(\cG,K^\times)$ is trivial if and only if $\rho^\proj$ can be lifted to a representation $\cG \to \GL_n(K)$.
If $G$ is an absolute Galois group as above, the question to decide whether a map $\pi: G \twoheadrightarrow \cG$ exists such that the diagram
\[\xymatrix@=.5cm{
&&& G \ar@{->>}^{\rho^\proj}[d]\ar@{-->>}_\pi[dl] \\
1 \ar@{->}[r] & A \ar@{->}[r] & \cG \ar@{->}[r] &\rho^\proj(G) \ar@{->}[r] & 1 \\ } \]
commutes, is called a {\em Galois embedding problem}.

In the cases of interest to us, namely the groups $\rho^\proj(G)=\PGL_2(\FF_q)$ with an odd prime power~$q$ and $A = C_{2^r}$, this problem was studied by Quer in~\cite{Quer}, building on work by Serre~\cite{SerreWitt}.
By Propositions 2.1(i) and 2.4~(i)-(ii) in \cite{Quer}, we have the commutative diagram
\begin{equation*}
\xymatrix@=.5cm{
\h^{2}(\PGL_{2}(\FF_{q}),C_{2^{r}}) \ar@{->}^(.55){\sim}[r] \ar@{->}[d]^{\res} & \ZZ/2\ZZ\times\ZZ/2\ZZ  \ar@{->}[d]\\
\h^{2}(\PSL_{2}(\FF_{q}),C_{2^{r}}) \ar@{->}^(.65){\sim}[r]   & \ZZ/2\ZZ. \\
}\end{equation*}
There are thus three non-trivial central extensions of $\PGL_{2}(\FF_{q})$ by~$C_{2^r}$, two of which restrict to the unique non-trivial central extension $2^{r}\PSL_{2}(\FF_{q})$ of $\PSL_{2}(\FF_{q})$ by~$C_{2^r}$. For $r=1$, the latter is simply~$\SL_2(\FF_q)$.
Following \cite[p.~549]{Quer}, the extension $2_{-}\PGL_{2}(\FF_{q})$ is defined by the pull-back of the exact sequence
\[1\rightarrow C_{2} \hookrightarrow \SL_{2}(\Fbar_q)\twoheadrightarrow \PSL_{2}(\Fbar_q)\rightarrow 1\]
under the embedding $\PGL_{2}(\FF_{q})\hookrightarrow \PGL_{2}(\Fbar_q)=\PSL_{2}(\Fbar_q)$.
For every $r>1$, let $2_{-}^{r}\PGL_{2}(\FF_{q})$ be defined as the image of $2_{-}\PGL_{2}(\FF_{q})$ under the map
\[\h^{2}(\PGL_{2}(\FF_{q}),C_{2})\rightarrow \h^{2}(\PGL_{2}(\FF_{q}),C_{2^{r}})\]
induced by the embedding $C_{2}\hookrightarrow C_{2^{r}}$.  
Concretely,
\[2_{-}\PGL_{2}(\FF_{q}) = \{ M \in \SL_2(\Fbar_q) \;|\; \exists\, \lambda \in \Fbar_q^\times: \lambda \cdot M \in \GL_2(\FF_q)\}
= \langle \SL_2(\FF_q), \mat 0 {-y}{y^{-1}} 0 \rangle \]
for any fixed $y \in \FF_{q^2}$ such that $y^2$ is a non-square in $\FF_q^\times$.
As furthermore $C_{2^r}$ lies in the centre, we have 
\[2^r_{-}\PGL_{2}(\FF_{q}) = \langle \SL_2(\FF_q), \mat 0 {-y}{y^{-1}} 0, \mat u00u \rangle \subset \GL_2(\Fbar_q), \]
where $u \in \Fbar_q^\times$ is a fixed element of order $2^r$.

We now assume $G=G_\QQ$ and $q=5$, but point out explicitly that Quer's results cover all~$q$ (but with modified formulas).
We exploit the exceptional isomorphism $S_5 \cong \PGL_2(\FF_5)$.
Let $K_1$ be the fixed field of the stabiliser group of one element for the action of $\rho^\proj(G_\QQ) \cong \PGL_2(\FF_5) \cong S_5$ on $5$ letters and let $d_{K_1}$ be its discriminant. For any prime~$p$, let $w(K_1)_p$ be the Hasse--Witt invariant associated with the trace form $\tr_{K_1/\QQ}(x^2)$ viewed as a quadratic form over~$\QQ_p$, and, further, denote by $(-,-)_{p}$ the Hilbert symbol over~$\QQ_p$.
Now, let $P(K_1)$ be the finite set of primes~$p$ such that $w(K_1)_p \cdot (-2,d_{K_1})_p \neq 1$.
Finally, in order to finish the set-up, let $\mu(p)$ denote the exponent of the highest power of $2$ dividing $p-1$ with the convention $\mu(2)=1$. Set $\mu(\rho^\proj) = \max \{\mu(p) \;|\; p \in P(K_1)\}$ (or $0$ if $P(K_1)=\emptyset$).
We have the following proposition of Quer's (\cite[Prop.~4.1(ii), Theorem~3.7]{Quer}).

\begin{proposition}[Quer]\label{prop:quer}
Let $\rho^\proj: G_\QQ \to \PGL_2(\FF_5)$ be a surjective projective Galois representation.
It has a lifting $\rho: G_\QQ \to \GL_2(\Fbar_5)$ with image $2^r_{-} \PGL_2(\FF_5)$ if and only if $r > \mu(\rho^\proj)$.
\end{proposition}

We apply this now with $K_1 = K_{f_5}$. Then we have $d_{K_1} = 19 \cdot 151$ and the trace form $\tr_{K_{f_5}/\QQ}(x^2)$ is equivalent to the quadratic form $X^{2}_{1}+4X^{2}_{2}-4d_{K_1}X^{2}_{3}+X_{4}X_{5}$ by \cite[Appendix II, Proposition 6]{SerreWitt}.
It can therefore be represented by the matrix $\textrm{diag}(4,-1,-4d_{K_{f_5}},1,1)$ and we compute for its Hasse-Witt invariant at the prime~$p$:
$w(Q_{K_{f_5}})_{p}=(-1,d)_p$ and so $w(Q_{K_{f_5}})_{p} \cdot (-2,d_{K_{f_5}})_p = (2,d_{K_{f_5}})_p = (2,19)_p \cdot (2,151)_p$.
Thus, the only primes that could be in $P(K_{f_5})$ are $2,19,151$. For each of them $\mu(p) = 1$, whence $\mu(\rho^\proj)\le 1$, showing that a lift of $\rho^\proj$ exists for $C_4$.
In fact, a short computation with Hilbert symbols gives us $P(\rho^\proj)=\{19\}$ and so $\mu(\rho^\proj)=1$.

\section{Computational Solution in Characteristic~$5$}

\subsection{An explicit embedding problem}
In this section, we give a concrete computational construction of the lift provided by Proposition~\ref{prop:quer} in our case.
All computations were carried out using Magma~\cite{Magma} and we state the results of these computations here without recalling every time how they were obtained.

In view of the exceptional isomorphism $S_5 \cong \PGL_2(\FF_5)$, the basic idea is to work with a degree~$6$ extension of~$\QQ$ instead of $K_{f_5}$. This is natural because $\PGL_2(\FF_5)$ acts on the six elements of $\PP^1(\FF_5)$.
Concretely, the field $K$, originally defined as the splitting field of $f_5$ over~$\QQ$, is also the splitting field of the polynomial
$g(X)= x^6 - x^5 - 10x^4 + 30x^3 - 31x^2 + 7x + 9 \in \ZZ[x]$. We let $K_g := \QQ[x]/(g(x))$\footnote{\cite[Number Field 6.2.23615200909.1]{lmfdb}}
and consider the projective Galois representation
\[\rho^\proj: G_\QQ \twoheadrightarrow \Gal(K/\QQ) \cong \PGL_2(\FF_5). \]
We know from \S\ref{sec:gp} that there is a linear lift with kernel~$C_4$.
Now, we will construct a polynomial the splitting field of which corresponds to such a lift.
By \S\ref{sec:gp}, there are three non-trivial group extensions of $\PGL_2(\FF_5)$ by~$C_4$, but only two of them restrict to the unique non-split extension of $A_5=\PSL_2(\FF_5)$ by~$C_4$. The split extension of $A_5$ by $C_4$ cannot correspond to a linear lift of the projective representation.
The other two extensions are $\GL_2(\FF_5)$ and $C_4._6 S_5$ in the notation of Tim Dokchitser's project GroupNames~\cite{GroupNames}.
The former cannot occur either since in that case the determinant of $\rho$ would be a Dirichlet character of order~$4$ ramifying only at $19$ and $151$, which does not exist because $(\ZZ/19 \cdot 151\ZZ)^\times$ does not possess any element of order~$4$. Consequently, $C_4._6 S_5$ is the extension $4_{-} \PGL_2(\FF_5)$ (this can also be verified explicitly).

The group $\cG=C_4._6 S_5$ is a transitive permutation group on $48$ letters, and this is the minimum.
One finds that $\cG$ has a unique conjugacy class of subgroups $[H]$ of order~$80$.
Furthermore, $H$ contains a unique normal subgroup $U$ of order~$5$. The quotient $H/U$ is isomorphic to $C_8 \times C_2$.
There are hence also two normal subgroups $N_1,N_2$ of~$H$ of order~$10$ having $C_8$ as quotient. None of them contains a non-trivial normal subgroup of~$G$.

If there is a Galois extension $\tilde{K}$ of~$\QQ$ with Galois group $\cG$ such that $K_g = \tilde{K}^H$, then by the preceding group theory discussion, $K_g$ admits two cyclic extensions of degree~$8$ contained in~$\tilde{K}$ and both these extensions have $\tilde{K}$ as splitting field. This necessary condition leads us to look for $C_8$-extensions of~$K_g$ in order to construct~$\tilde{K}$.
One can use explicit class field theory in Magma to find a cyclic extension of degree~$8$ of $K_g$ inside the ray class field of conductor $151$ if one allows one of the two infinite places to ramify. One can compute a polynomial $h \in \ZZ[x]$ of degree~$48$ describing it and computationally check that its Galois group is indeed~$\cG$. See the appendix \S\ref{subsec:h} for an example of such a polynomial.
We remark that it is not enough to include only one of the two primes above~$151$ into the conductor.
This is in accordance with the computations at the inertia groups at $19$ and $151$ below.

\subsection{A linear Galois representation}\label{ssec:lingal}

By the explicit matrix description of $4_{-} \PGL_2(\FF_5) \cong C_4._6 S_5$ given in \S\ref{sec:gp}, we obtain a Galois representation
\[\rho: G_\QQ \twoheadrightarrow \Gal(\tilde{K}/\QQ)=\cG \subset \GL_2(\Fbar_5) \]
lifting $\rho^\proj$ with image the subgroup of $\GL_2(\Fbar_5)$ generated by $\SL_2(\FF_5)$, the scalar $\mat 2002$ and the order~$2$ matrix $\mat 0 \zeta {-\zeta^{-1}} 0$, where we take $\zeta \in \FF_{5^2}^\times$ of order $8$ satisfying $\zeta^2=2$.
This explicit description allows us to relate the cycle type of an element in $S_5 \cong \PGL_2(\FF_5)$ to the trace and determinant of all possible lifts. Table~\ref{table:tr} contains all pairs of trace and determinant that occur for a given cycle type as well as other information.

\begin{table}
\begin{center}
\begin{tabular}{||l|l|l|l||}
\hline
Conjugacy class in $S_5$ & $\tr$ in $\theta_5^\st$ & $(\tr,\det)$ in $\cG \subset \GL_2(\Fbar_5)$ &  $\sgn$ \\
\hline
$(1)            $  &  $4$  &     $ (1, 4), (2, 1), (3, 1), (4, 4)$  & $ 1$ \\
$(1, 3, 5, 4, 2)$  &  $-1$ &     $ (1, 4), (2, 1), (3, 1), (4, 4)$  &  $ 1$ \\
$(2, 5)(3, 4)   $  &  $0$  &     $ (0, 1), (0, 4)$                  & $ 1$ \\
$(1, 4)         $  &  $2$  &     $ (0, 1), (0, 4)$                  &  $-1$ \\
$(1, 4, 5)      $  &  $1$  &     $ (1, 1), (2, 4), (3, 4), (4, 1) $ &  $ 1$ \\
$(1, 5)(2, 3, 4)$  &  $-1$ &     $ (\zeta, 4), (2 \zeta, 1), (-\zeta, 4), (-2\zeta, 1)$ &  $-1$ \\
$(1, 2, 5, 3)   $  &  $0$  &     $ (\zeta, 1), (2 \zeta, 4), (-\zeta, 1), (-2\zeta, 4)$ &  $-1$ \\
\hline
\end{tabular}
\end{center}
\caption{Data on representations}\label{table:tr}
\end{table}

We next determine the conductor of~$\rho$. As only $19$ and $151$ ramify, the ramification is tame and inertia groups are cyclic.
Recall that at both primes the inertia groups in $K/\QQ$ are of order~$2$ generated by transpositions. Each one of the corresponding inertia groups of $\tilde{K}/\QQ$ will hence be generated by a lift of a transposition. According to Table~\ref{table:tr}, such lifts have characteristic polynomials $X^2-1$ or $X^2+1$ and thus the inertia orders are $2$ or~$4$.
Recall further that the polynomial~$h$ was obtained via a ray class field unramified at~$19$. Consequently, the order of inertia at~$19$ in $\tilde{K}/\QQ$ is still~$2$ and, moreover, it fixes a line since $1$ is an eigenvalue of the inertia generator.
As the extension $\tilde{K}/K$ ramifies at~$151$, the inertia group at $151$ of $\tilde{K}/\QQ$ is of order~$4$ and does not fix any line.
This implies that the conductor of~$\rho$ is $19 \cdot 151^2=433219$.

The group $\cG$ admits three surjective group homomorphisms $\cG \to C_2$, namely: the determinant $\det$ (via the embedding of $\cG$ in $ \GL_2(\Fbar_p)$ described above), the sign of a permutation $\sgn$ via the projection $\cG \to S_5$ and the product $\det \cdot \sgn$.
As $\QQ(\sqrt{19\cdot 151})$ is fixed by the sign, we have $\sgn = \chi_{19 \cdot 151} = \left(\frac{19 \cdot 151}{\cdot}\right)$. As the characteristic polynomial of a generator of the inertia group of $151$ is $X^2+1$, the character $\det \circ \rho$ is unramified at~$151$; it does ramify at~$19$. Consequently, $\det \circ \rho = \chi_{-19} = \left(\frac{-19}{\cdot}\right)$ and $\det \circ \sgn = \left(\frac{-151}{\cdot}\right)$.
Table~\ref{table:ker} summarises this information and names the three normal subgroups of~$\cG$ of index~$2$.

\begin{table}
\begin{center}
\begin{tabular}{||l|l|l|l||}
\hline
Character          & Quadratic Field & Group Name \cite{GroupNames} of $\ker$ & Generators \\
\hline
$\sgn$             & $E=\QQ(\sqrt{19\cdot 151})$ & $C_4 . A_5$ & $\SL_2(\FF_5)$, $\mat 2002$ \\
$\det$             & $\QQ(\sqrt{-19})$ & $\mathrm{CSU}_2(\FF_5) \cong 2_{-}\PGL_2(\FF_5)$ & $\SL_2(\FF_5)$, $\mat 0{\zeta}{-\zeta^{-1}}0$  \\
$\det\cdot \sgn$   & $\QQ(\sqrt{-151})$ & $C_2.S_5$ & $\SL_2(\FF_5)$, $\mat 0{\zeta}{\zeta^{-1}}0$  \\
\hline
\end{tabular}
\end{center}
\caption{Normal subgroups of index $2$ in $\cG$}\label{table:ker}
\end{table}

Let $L = \QQ(\sqrt{-19},\sqrt{-151}) \subset \tilde{K}$ be the compositum of the three corresponding quadratic fields.
We first remark that $L/E$ is an unramified CM extension. It hence corresponds to a quadratic character
$\epsilon: G_E \to \{\pm 1\} \subset \FF_5^\times$, which is unramified at all finite places and totally odd.

Another character will be of importance to us. Let $\fp$ be the prime of~$E$ lying above~$151$.
The ray class group of $E$ of conductor $\fp \infty_1$ is cyclic of order~$150$. Thus, $E$ admits a $C_2$-extension ramifying only at~$\fp$ and one of the two (real) places. Let $\delta: G_E \to \{\pm 1\} \subset \FF_5^\times$ be the corresponding character. It is not the restriction of any character of~$G_\QQ$.

The restriction to $G_E$ of $\rho$ is unramified at~$19$ (as $I(\tilde{K}/\QQ)_{19}=C_2$ and $E/\QQ$ ramifies at~$19$), but it does ramify at~$151$. The inertia group $I(\tilde{K}/\QQ)_{151}$ is generated by an order~$4$ matrix of determinant~$1$ lifting a transposition, whence it is conjugate to $\mat 0{2\zeta}{2\zeta^{-1}}0$, so that $I(\tilde{K}/E)_{151}$ is generated by its square, i.e.\ by $\mat {-1}00{-1}$.
Consequently, the twist $\rho|_E \otimes \delta$ is unramified at all finite places.
It is a lift of the projective representation $G_E \to A_5$.

\begin{proof}[Proof of Theorem~\ref{thm:F}.]
Let $\rho$ be the Galois representation constructed in this section.
By Serre's Modularity Conjecture proved in \cite[Theorem~1.2]{KW1} and \cite[Corollary~0.2]{Kisin}, together with results on the optimal weight due to Edixhoven~\cite[Theorem~4.5]{Edix}, there exists a Hecke eigenform $F \in S_1(19\cdot 151^2,\chi_{-19},\Fbar_5)$ such that its attached Galois representation $\rho_F$ is isomorphic to~$\rho$.
The other assertions have been established above except for the non-liftability to a holomorphic weight one modular form. This simply follows from the well-known group theoretic result already known to Klein~\cite{Klein} that $S_5$ is not a subquotient of~$\PGL_2(\CC)$, contradicting the existence of any attached Artin representation.
\end{proof}

As its level is very big, we do not see how to compute the weight one modular form~$F$ explicitly on the computer without using its Galois representation~$\rho$.

\begin{proof}[Proof of Corollary~\ref{cor:int2}.]
All statements can be verified using Tables~\ref{table:tr} and~\ref{table:ker} together with the relation $a_p(F) = \tr(\rho(\Frob_p))$ and
$\sgn = \left(\frac{19 \cdot 151}{\cdot}\right)$ as well as $\det \circ \rho = \chi_{-19} = \left(\frac{-19}{\cdot}\right)$.
More conceptually, the congruence of $\theta_5^\st$ can also be derived from Corollary~\ref{cor:Asai}.
\end{proof}

\subsection{Appendix}\label{subsec:h}
Here is a polynomial the splitting field of which is the field cut out by $\rho$ in characteristic~$5$ from Theorem~\ref{thm:F}.

\begin{small}
\noindent
$h(X) = x^{48} - 10 x^{47} - 13 x^{46} + 173 x^{45} - 1278 x^{44} + 27542 x^{43} - 113958 x^{42} - 286430 x^{41} + 4655329 x^{40}\\
- 26503188 x^{39} + 81919958 x^{38} + 32368110 x^{37} - 2439071195 x^{36} +  10669493052 x^{35}- 26002615844 x^{34}\\
+ 164051953843 x^{33} - 205565265490 x^{32} - 3098320327510 x^{31} + 15580543347067 x^{30}- 72094759904784 x^{29}\\
+ 145352373756651 x^{28} + 1124294833301773 x^{27} - 4736762045102396 x^{26}- 4428623245164253 x^{25}\\
+ 46182217850444449 x^{24} - 135621698076328862 x^{23}+ 69305601476994468 x^{22} + 3791910125162463418 x^{21}\\
- 14065814910470191337 x^{20} - 13348365591179322148 x^{19} + 124088837951469551773 x^{18}\\
- 286160102141567453230 x^{17}+ 886712293571081863675 x^{16} + 1149044936598536032213 x^{15}\\
- 14719660664892430787424 x^{14}+ 10532624944253653528232 x^{13} + 56786830275191356552239 x^{12}\\
- 52406153009314731797162 x^{11} - 149323467251503445783614 x^{10} +  669256616167712724103315 x^{9}\\
- 899500431661959205787756 x^{8} - 3108487402346193671659483 x^{7} + 4134225816838771492997125 x^{6}\\
+ 14451282311965453942468438 x^{5} -     6338226206230170122590826 x^{4} - 39455974427388666679528925 x^{3}\\
- 30466901209941980350644125 x^{2} + 70704214646412544819950625 x + 72894568328135627845675625$
\end{small}

\section{Solution via Asai transfer}

\subsection{The standard representation via the Asai transfer}

In this section we work with complex representations.
For the convenience of the reader, we recall the construction of the Asai transfer (also called tensor induction or multiplicative induction) of a group representation. We follow~\cite{Prasad}.  
Let $G$ be a group and $H$ a subgroup of~$G$ of index~$m$. Let $V$ be an $n$-dimensional representation of $H$. Let $g_{1},\ldots,g_{m}$ be a set of representatives for the left cosets of $H$ in $G$. For $g\in G$ and for each $j\in\{1,\ldots,m\}$, choose $i\in \{1,\dots,m\}$ such that $gg_{i}\in g_{j}H$ and define $h(g,i)\in H$ by $gg_{i}=g_{j}h(g,i)$. The Asai transfer of $V$ from $H$ to $G$, denoted $\text{Asai}^{G}_{H}(V)$, is the vector space $V^{\otimes m}$ equipped with the action defined by 
\[ g(v_{1}\otimes\ldots\otimes v_{m})=w_{1}\otimes\ldots\otimes w_{m}\]
where, for each $j\in\{1,\ldots,m\}$, $w_{j}=h(g,i)v_{i}$.  

We now describe the special case of tensor induction which we will need.
We assume the index of $H$ in~$G$ to be~$2$ and we let $\eta:H \to \GL_n(\CC)$ be a representation with character~$\psi$.
For $g \in G \setminus H$ and $h \in H$, we then have (see Lemma 4.1 of \cite{Isaacs} and the discussion preceding it)
\begin{equation}\label{eq:asai}
 \tr(\Asai_H^G(\eta)(h)) = \psi(h) \psi(g^{-1}hg) \textnormal{ and } \tr(\Asai_H^G(\eta)(g)) = \psi(g^2).
\end{equation}

Let $r \ge 1$ and $\eta: 2^r\PSL_2(\FF_5) \to \GL_2(\CC)$ be an irreducible representation.
Write $\Asai(\eta)$ for $\Asai_{2^r\PSL_2(\FF_5)}^{2^r_{-}\PGL_2(\FF_5)}(\eta)$.
We now describe it on any element~$c$ in the centre of $2^r_{-}\PGL_2(\FF_5)$. Such $c$ lies in $2^r\PSL_2(\FF_5)$ and we have $\eta(c) = \mat \lambda 00 \lambda$. Consequently, from \eqref{eq:asai} we get
\begin{equation}\label{eq:asai1}
  \tr\big(\Asai(\eta)(c)\big) = 4 \lambda^2 = 4 \cdot \det(\eta(c)).
\end{equation}
We next aim at twisting the representation appropriately, making it trivial on the centre.

\begin{lemma}\label{lem:asai}
For $r \ge 1$ let $\alpha: C_{2^r} \to \CC^\times$ and $\beta: \PGL_2(\FF_5) \to \CC^\times$ be characters such that $\alpha$ restricted to the subgroup $C_2$ of $C_{2^r}$ is trivial.
Then there exists a unique character
\[ \chi: 2^r_{-}\PGL_2(\FF_5) \to \CC^\times \]
such that $\chi|_{C_{2^r}} = \alpha$ and $\chi|_{2_{-}\PGL_2(\FF_5)}=\beta \circ \pi$ for the natural projection $\pi: 2_{-}\PGL_2(\FF_5) \twoheadrightarrow \PGL_2(\FF_5)$.
\end{lemma}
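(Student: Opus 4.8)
The plan is to realise $G := 2^r_{-}\PGL_2(\FF_5)$ as an internal product of the two subgroups on which $\chi$ is prescribed, and then to glue the two given characters along their common restriction. Concretely, from the explicit matrix description in Section~\ref{sec:gp} we have $G = \langle \SL_2(\FF_5), \mat{0}{-y}{y^{-1}}{0}, \mat{u}{0}{0}{u}\rangle$, so $G$ is generated by the subgroup $S := 2_{-}\PGL_2(\FF_5) = \langle \SL_2(\FF_5), \mat{0}{-y}{y^{-1}}{0}\rangle$ together with the central cyclic group $C_{2^r} = \langle \mat{u}{0}{0}{u}\rangle$. Since $C_{2^r}$ consists of scalar matrices, it is central in $\GL_2(\Fbar_5)$, so the set product $C_{2^r}\cdot S$ is already a subgroup and therefore equals~$G$.

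First I would pin down the intersection $C_{2^r}\cap S$. The only scalar matrices lying in $S$ are $\pm I$: a scalar $\mat{a}{0}{0}{a}$ belongs to $\SL_2(\FF_5)$ only when $a^2=1$, and the nontrivial coset of $S$ contains no scalars. Hence $C_{2^r}\cap S = \{\pm I\} = C_2$, which is precisely the subgroup $C_2 \subset C_{2^r}$ appearing in the hypothesis and which also equals $\ker(\pi)$.

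Next I would verify that the two prescribed characters agree on this intersection, which is the only compatibility needed to glue them. By hypothesis $\alpha$ is trivial on $C_2$, and $(\beta\circ\pi)|_{C_2}$ is trivial because $C_2 = \ker(\pi)$. I would then define $\chi$ on $G = C_{2^r}\cdot S$ by $\chi(cs) = \alpha(c)\cdot(\beta\circ\pi)(s)$ for $c\in C_{2^r}$ and $s\in S$. Well-definedness follows from the previous paragraph: if $cs = c's'$ then $z := c^{-1}c' = s(s')^{-1} \in C_2$, and since both $\alpha$ and $\beta\circ\pi$ annihilate $z$, the two expressions for $\chi$ coincide. That $\chi$ is a homomorphism is immediate from the centrality of $C_{2^r}$ together with the fact that $\alpha$ and $\beta\circ\pi$ are homomorphisms, and by construction $\chi|_{C_{2^r}} = \alpha$ and $\chi|_{S} = \beta\circ\pi$.

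Finally, uniqueness is automatic: since $G$ is generated by $C_{2^r}$ and~$S$, any character of $G$ restricting to $\alpha$ on $C_{2^r}$ and to $\beta\circ\pi$ on $S$ is forced to agree with $\chi$ on a generating set, hence everywhere. I do not anticipate a serious obstacle here; the only point demanding genuine care is the exact computation $C_{2^r}\cap S = C_2$, since the gluing construction hinges on \emph{both} given characters being trivial on this intersection, and the triviality of $\alpha$ on $C_2$ is exactly the hypothesis that makes this work.
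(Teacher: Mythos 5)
Your proof is correct, but it takes a genuinely different route from the paper's. You realise $G=2^r_{-}\PGL_2(\FF_5)$ concretely inside $\GL_2(\Fbar_5)$ as an internal central product $G=C_{2^r}\cdot S$ with $S=2_{-}\PGL_2(\FF_5)$, compute the amalgamating subgroup $C_{2^r}\cap S=\{\pm I\}=C_2=\ker(\pi)$, and then glue $\alpha$ and $\beta\circ\pi$ by the standard well-definedness check on products $cs$, with uniqueness coming from generation. The paper instead argues cohomologically, using that $2^r_{-}\PGL_2(\FF_5)$ is by construction the pushout of $2_{-}\PGL_2(\FF_5)$ along $C_2\hookrightarrow C_{2^r}$: the extension is therefore described by a $2$-cocycle $\gamma$ on $\PGL_2(\FF_5)$ taking values in $C_2$, every element is \emph{uniquely} a pair $(c,g)\in C_{2^r}\times\PGL_2(\FF_5)$, and the formula $\chi\bigl((c,g)\bigr)=\alpha(c)\beta(g)$ is automatically well defined, the homomorphism property being the one-line computation $\alpha(\gamma(g,g'))=1$. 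Both arguments hinge on exactly the same hypothesis, triviality of $\alpha$ on $C_2$, but they distribute the work differently: in the cocycle picture well-definedness is free and multiplicativity uses the hypothesis, whereas in your central-product picture multiplicativity is free (centrality of $C_{2^r}$) and well-definedness uses the hypothesis together with the intersection computation $C_{2^r}\cap S=C_2$, a point the paper never needs to make explicit. Your version is more elementary and self-contained, avoiding group cohomology entirely, at the cost of relying on the explicit matrix model of \S\ref{sec:gp}; the paper's version is shorter and applies verbatim to any central extension arising as such a pushout, independently of any matrix realisation.
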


\begin{proof}
The point is that the image of the $2$-cycle $\gamma \in \h^2(\PSL_2(\FF_5),C_{2^r})$ which describes the central extension $2^r_{-}\PGL_2(\FF_5)$ lies in $C_2$ by construction.
Writing elements of $2^r_{-}\PGL_2(\FF_5)$ uniquely as $(c,g) \in C_{2^r} \times \PGL_2(\FF_5)$, we define $\chi$ uniquely by letting $\chi\big((c,g)\big) = \alpha(c)\beta(g)$.
This is indeed a group homomorphism with the desired properties because
\[ \chi\big( (c,g) \cdot (c',g') \big) = \chi\big( (cc'\gamma(g,g'), gg' ) \big) = \alpha(cc'\gamma(g,g')) \cdot \beta(gg') = \chi\big( (c,g) \big) \cdot\chi\big( (c',g') \big) \]
since $\alpha(\gamma(g,g')) = 1$ by assumption.
\end{proof}

\begin{proposition}\label{prop:asai}
Let $r \ge 1$, $\eta: 2^r\PSL_2(\FF_5) \to \GL_2(\CC)$ and $\Asai(\eta)$ as above.
Let \newline $\chi: 2^r_{-}\PGL_2(\FF_5) \to \CC^\times$ be the unique character from Lemma~\ref{lem:asai} such that $\chi|_{C_{2^r}} = \big(\det \circ \eta|_{C_{2^r}}\big)^{-1}$ and $\chi|_{2_{-}\PGL_2(\FF_5)}=\epsilon \circ \pi$ where $\epsilon: \PGL_2(\FF_5) \cong S_5 \to \{\pm 1\}$ is the sign character.

Then $\Asai(\eta) \otimes \chi$ factors through $\PGL_2(\FF_5) \cong S_5$ and \[\Asai(\eta) \otimes \chi \cong \theta_5^\st.\]
\end{proposition}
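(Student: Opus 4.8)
The plan is to verify the isomorphism $\Asai(\eta)\otimes\chi\cong\theta_5^\st$ by comparing characters on the group $\tilde{G}:=2^r_{-}\PGL_2(\FF_5)$. First I would confirm that $\Asai(\eta)\otimes\chi$ genuinely factors through $\PGL_2(\FF_5)\cong S_5$, which amounts to checking triviality on the centre $C_{2^r}$. On any central element $c$, equation~\eqref{eq:asai1} gives $\tr(\Asai(\eta)(c))=4\det(\eta(c))$, while the twisting character contributes $\chi(c)=(\det\circ\eta|_{C_{2^r}})^{-1}(c)$. Since $\Asai(\eta)(c)$ is the scalar $\lambda^2=\det(\eta(c))$ on the $4$-dimensional space $V^{\otimes 2}$, multiplying by $\chi(c)$ yields the identity, so $\Asai(\eta)\otimes\chi$ is trivial on the centre and descends to a $4$-dimensional representation of $S_5$.

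Having reduced to a representation of $S_5$, the second step is to identify which $4$-dimensional representation it is. Since $S_5$ has exactly two irreducible $4$-dimensional representations, namely $\theta_5^\st$ and its twist $\theta_5^\st\otimes\epsilon$ by the sign character, I would first argue that $\Asai(\eta)\otimes\chi$ is irreducible and then pin down the correct twist. For irreducibility I expect to use that $\eta$ is irreducible of dimension $2$ and that the Asai (tensor induction) of an irreducible representation from an index-$2$ subgroup is irreducible precisely when $\eta$ is not isomorphic to its conjugate $\eta^g$; here $\eta$ and $\eta^g$ differ (the two restrict differently, as reflected in the ramification data for $\rho$), so $\Asai(\eta)$ stays irreducible, and twisting by a character preserves this. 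To distinguish $\theta_5^\st$ from $\theta_5^\st\otimes\epsilon$, I would compute the character on a well-chosen element outside $H=2^r\PSL_2(\FF_5)$, i.e.\ a lift of an odd permutation.

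The concrete character computation is where the two formulae in~\eqref{eq:asai} do the real work. For $g\in\tilde{G}\setminus H$ lifting a transposition or a $4$-cycle, the Asai trace is $\tr(\Asai(\eta)(g))=\psi(g^2)$ where $\psi$ is the character of $\eta$; meanwhile the sign twist $\chi$ evaluates to $\epsilon(\bar g)=-1$ on such odd classes. For $h\in H$ lifting an even permutation, I use $\tr(\Asai(\eta)(h))=\psi(h)\psi(g^{-1}hg)$ with $\chi(\bar h)=+1$. Running these against the known character table of $\theta_5^\st$ on the conjugacy classes of $S_5$ (values $4,-1,0,2,1,-1,0$ as recorded in Table~\ref{table:tr}) should match exactly, thereby forcing $\Asai(\eta)\otimes\chi\cong\theta_5^\st$ rather than its sign twist. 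The main obstacle I anticipate is the bookkeeping in the tensor-induction formula: one must track, for representative elements $g$ of each odd class, whether $g$ squares into $H$ and evaluate $\psi(g^2)$ correctly, and separately ensure the coset representative and the element $h(g,i)\in H$ in the definition of $\Asai$ are handled consistently so that the sign from $\chi$ lands on the correct classes. Once the character values agree on all seven conjugacy classes of $S_5$, the isomorphism follows since a finite-dimensional complex representation is determined up to isomorphism by its character.
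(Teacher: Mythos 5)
Your skeleton matches the paper's proof: triviality on the centre via \eqref{eq:asai1} (the scalar $\lambda^2=\det(\eta(c))$ cancels against $\chi(c)=\det(\eta(c))^{-1}$), hence descent to $S_5$, followed by a character computation using \eqref{eq:asai}; and your distinguishing computation on a lift $g$ of a transposition (namely $\psi(g^2)=\tr\big(\eta(\mat{-1}00{-1})\big)=-2$ together with $\chi(g)=\epsilon(g)=-1$, giving the value $+2$, which is $\tr(\theta_5^\st)$ on transpositions rather than $-2$) is exactly the paper's final step.

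The genuine flaw is your irreducibility argument. The criterion you invoke --- that induction from an index-$2$ subgroup of an irreducible $\eta$ is irreducible if and only if $\eta\not\cong\eta^g$ --- is Mackey's criterion for \emph{ordinary} induction and is false for tensor induction. Note that $\Asai_H^G(\eta)$ restricted to $H$ is $\eta\otimes\eta^g$, which can be reducible even when $\eta\not\cong\eta^g$: for instance, take $H_0=C_7\rtimes C_3\subset G_0=C_7\rtimes C_6$ and $\eta$ one of the two $3$-dimensional irreducibles of $H_0$; these are swapped by conjugation, yet the $9$-dimensional tensor induction cannot be irreducible because $G_0$ has no irreducible representation of dimension $9$. (Your side remark that $\eta\not\cong\eta^g$ is ``reflected in the ramification data for $\rho$'' is also out of place: the proposition is purely group-theoretic, and the relevant fact is that conjugation by $g$ induces the diagonal outer automorphism of $\SL_2(\FF_5)$, which swaps its two $2$-dimensional irreducibles.) The paper handles irreducibility by a parity trick instead: since $S_5$ has no irreducible representations of dimension $2$ or $3$, a $4$-dimensional representation is either irreducible or a sum of four characters; in the latter case every character value would be even, whereas $\psi$ takes the odd value $-1$ on order-$3$ elements, so $\psi(h)\psi(g^{-1}hg)\chi(h)$ is odd there. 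That said, your fallback plan of matching the character of $\Asai(\eta)\otimes\chi$ against that of $\theta_5^\st$ on all seven conjugacy classes is sound and makes irreducibility irrelevant (complex representations are determined by their characters); it is just more laborious, requiring the full character of the $2$-dimensional irreducible of $\SL_2(\FF_5)$, including the golden-ratio values on order-$5$ and order-$10$ classes, and careful bookkeeping of which element each lift squares to. So your proposal is completable, but the irreducibility step as written would not survive scrutiny and should either be replaced by the parity argument or dropped in favour of the full character comparison.
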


\begin{proof}
By \eqref{eq:asai1}, the restriction of $\Asai(\eta) \otimes \chi$ to $C_{2^r}$ is the trivial $4$-dimensional representation, implying that it factors through $\PGL_2(\FF_5) \cong S_5$.
An inspection of the character table of $S_5$ shows that $\Asai(\eta) \otimes \chi$ is then one of the two irreducible $4$-dimensional representations of~$S_5$, which are $\theta_5^\st$ and $\theta_5^\st \otimes \epsilon$.
Indeed, if it were a sum of four $1$-dimensional representations, then all character values would be even, which is not the case as the trace of $\eta$ is odd on elements of order~$3$ in $2\PSL_2(\FF_5)$.

As in \S\ref{ssec:lingal}, consider $g = \mat 0 {-\zeta} {\zeta^{-1}} 0 \in 2_{-}\PGL_2(\FF_5)$ for $\zeta \in \FF_{5^2}$ such that $\zeta^2=2$ is a non-square in~$\FF_5$. We have $\tr(\eta(g^2)) = \tr(\eta(\mat{-1}00{-1})) = -2$. As $g$ lies in $2_{-}\PGL_2(\FF_5)$ but not in $2\PSL_2(\FF_5)$ its projection to $\PGL_2(\FF_5) \cong S_5$ is a transposition, whence $\chi(g) = \epsilon(g) = -1$ and $\tr(\rho^\st(g)) = 2$.
Thus, this computation proves that $\Asai(\eta) \otimes \chi$ is not isomorphic to $\theta_5^\st \otimes \epsilon$, so it is isomorphic to $\theta_5^\st$.
\end{proof}

In view of \eqref{eq:asai}, we obtain the following description of the character of $\theta_5^\st$.
\begin{corollary}\label{cor:asai}
With notation as in Proposition~\ref{prop:asai} and $\psi = \tr \circ \eta$, for any $g \in S_5 \setminus A_5$ and any $h \in A_5$ we have
\[ \tr(\theta_5^\st(h)) = \psi(\hat{h})\cdot \psi(\hat{g}^{-1}\hat{h}\hat{g}) \cdot \chi(\hat{h})
\textnormal{ and }
   \tr(\theta_5^\st(g)) = \psi(\hat{g}^2) \cdot \chi(\hat{g}), \]
where $\hat{g} \in 2^r_{-}\PGL_2(\FF_5)$ and $\hat{h} \in 2^r\PSL_2(\FF_5)$ are any lifts of $g$ and $h$, respectively.
\end{corollary}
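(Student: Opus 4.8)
The plan is to read off both identities by combining the isomorphism $\Asai(\eta)\otimes\chi\cong\theta_5^\st$ of Proposition~\ref{prop:asai} with the explicit index-two trace formula~\eqref{eq:asai}, applied to the pair $H=2^r\PSL_2(\FF_5)$ and $G=2^r_{-}\PGL_2(\FF_5)$. The relevant coset dictionary is that the projection $G\twoheadrightarrow\PGL_2(\FF_5)\cong S_5$ identifies $H$ with $A_5$, so that a lift $\hat{h}$ of $h\in A_5$ lies in $H$ while a lift $\hat{g}$ of $g\in S_5\setminus A_5$ lies in $G\setminus H$---these are exactly the two regimes covered by~\eqref{eq:asai}.

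First I would use that $\chi$ is one-dimensional, so tensoring by it merely scales the trace: $\tr\big((\Asai(\eta)\otimes\chi)(x)\big)=\tr(\Asai(\eta)(x))\cdot\chi(x)$ for every $x\in G$. Substituting $x=\hat{h}$ and invoking the first equation of~\eqref{eq:asai} yields $\tr(\theta_5^\st(h))=\psi(\hat{h})\,\psi(\hat{g}^{-1}\hat{h}\hat{g})\,\chi(\hat{h})$, while substituting $x=\hat{g}$ and invoking the second gives $\tr(\theta_5^\st(g))=\psi(\hat{g}^2)\,\chi(\hat{g})$. These are precisely the two claimed formulas, and the independence of the right-hand side of the first one from the particular $g\in S_5\setminus A_5$ is already built into~\eqref{eq:asai}, whose left-hand side does not involve $g$.

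The one step that genuinely needs verification is that each right-hand side is independent of the chosen lifts, since $\psi$ and $\chi$ are evaluated on the central extension rather than on $S_5$ itself. I would check this directly: replacing $\hat{h}$ by $\hat{h}z$ with $z\in C_{2^r}$ and $\eta(z)=\mu I$ scales both $\psi(\hat{h})$ and $\psi(\hat{g}^{-1}\hat{h}\hat{g})$ by $\mu$ (by centrality of $z$), while it scales $\chi(\hat{h})$ by $\chi(z)=(\det\eta(z))^{-1}=\mu^{-2}$; the three factors multiply to $\mu\cdot\mu\cdot\mu^{-2}=1$. The analogous computation for the second formula uses $(\hat{g}z)^2=\hat{g}^2z^2$ with the same cancellation, and replacing $\hat{g}$ by $\hat{g}z$ leaves $\hat{g}^{-1}\hat{h}\hat{g}$ unchanged. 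I expect this bookkeeping to be the only real content of the argument: the cancellation is engineered precisely by the normalisation $\chi|_{C_{2^r}}=(\det\circ\eta|_{C_{2^r}})^{-1}$ from Proposition~\ref{prop:asai}, which is exactly what makes $\Asai(\eta)\otimes\chi$ descend to $S_5$ in the first place.
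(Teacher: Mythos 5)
Your proposal is correct and matches the paper's (implicit) argument exactly: the paper states the corollary as an immediate consequence of the trace formula~\eqref{eq:asai} combined with the isomorphism $\Asai(\eta)\otimes\chi\cong\theta_5^\st$ of Proposition~\ref{prop:asai}, which is precisely what you do. Your additional verification that the right-hand sides are independent of the chosen lifts (via the normalisation $\chi|_{C_{2^r}}=(\det\circ\eta|_{C_{2^r}})^{-1}$) is a worthwhile piece of bookkeeping that the paper leaves tacit.
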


\subsection{Automorphy}

In this section, we prove Theorem~\ref{thm:Asai}. The key input providing the automorphy is the following strong result of Pilloni and Stroh.

\begin{theorem}[\cite{Pil}, Th\'eor\`eme 0.3]\label{thm:PS}
Let $E$ be a totally real field and $\eta:G_{E}\rightarrow \GL_{2}(\CC)$ be a totally odd, irreducible representation.
Then $\eta$ is modular, attached to a Hilbert cuspidal eigenform of weight one. 
\end{theorem}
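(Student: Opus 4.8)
The plan is to deduce weight-one modularity of $\eta$ from residual modularity in parallel weight one together with a geometric classicality argument of Buzzard--Taylor type, as carried out for Hilbert modular forms in Th\'eor\`eme 0.3. I would first dispose of the dihedral case: if the projective image of $\eta$ is dihedral, then $\eta$ is automorphically induced from a finite-order Hecke character of a quadratic extension of $E$, and the corresponding theta series is a parallel weight-one Hilbert cuspidal eigenform realising $\eta$. So I may assume the projective image is one of $A_4$, $S_4$, $A_5$, where no such direct construction is available. Fix $\Qbar \cong \CC$, so that $\eta$ is valued in $\GL_2(\Qbar)$, and choose a rational prime $p$ and a place $\lambda \mid p$ of a number field containing the (finite) image. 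For all but finitely many $p$, the reduction $\overline{\eta} := (\eta \bmod \lambda) : G_E \to \GL_2(\Fbar_p)$ has the same image as $\eta$ --- hence remains irreducible and totally odd --- and is unramified at every place $v \mid p$; I would use the freedom in the choice of $p$ to place the local representation at $p$ in a favourable position for the gluing step (for instance so that $\eta(\Frob_v)$ has distinct eigenvalues at the $v \mid p$, the most favourable case). Crucially, unramifiedness at $p$ forces the predicted Serre weight of $\overline{\eta}$ to be parallel weight one.

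Next I would invoke residual modularity: by the available cases of Serre's modularity conjecture over totally real fields (Khare--Wintenberger, Kisin, Gee and others; alternatively via potential automorphy and solvable base change), $\overline{\eta}$ is modular, and by the weight prediction it arises from a parallel weight-one Hilbert modular eigenform $\bar f$ over $\Fbar_p$ with $\rho_{\bar f} \cong \overline{\eta}$, viewed geometrically as a section of the relevant automorphic line bundle on the Hilbert modular variety.

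The main obstacle --- and the entire substance of Th\'eor\`eme 0.3 --- is the passage from $\bar f$ to a classical characteristic-zero eigenform. Parallel weight one is non-regular (the Hodge--Tate weights are $\{0,0\}$ at each place above $p$), so Taylor--Wiles patching and Coleman's small-slope classicality both break down at this boundary weight. Instead, using the Frobenius eigenvalues of $\eta$ at the places above $p$, I would form the various ordinary $p$-stabilisations and lift them to characteristic-zero overconvergent $p$-adic eigenforms of parallel weight one with prescribed $U_v$-eigenvalues. One then shows these overconvergent forms are genuinely classical by analytically continuing them off the multiplicative ordinary locus across the whole Hilbert modular variety --- controlling the dynamics of the Hecke correspondences $U_v$ and the behaviour of the canonical subgroup along the partial-supersingular strata --- and gluing the continuations arising from the different $p$-stabilisations into a single classical section. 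This analytic continuation and gluing, resting on the refined surconvergence and ramification estimates of Pilloni--Stroh (after Kassaei and Sasaki), is the hard part.

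Finally I would identify the resulting classical parallel weight-one eigenform $G$ with $\eta$. The gluing construction forces the attached $\lambda$-adic representation $\rho_G$ to be unramified at $p$ with the prescribed Frobenius data at each $v \mid p$, and to satisfy $\rho_G \equiv \overline{\eta} \pmod\lambda$. Matching Hecke eigenvalues with traces of Frobenius away from the ramified set, together with the rigidity coming from both $\rho_G$ and $\eta$ having finite image and agreeing residually, then yields $\rho_G \cong \eta$. Hence $\eta$ is modular, attached to the parallel weight-one Hilbert cuspidal eigenform $G$, as claimed.
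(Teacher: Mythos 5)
First, a point of comparison: the paper does not prove this statement at all --- it is imported verbatim from Pilloni--Stroh \cite{Pil} (Th\'eor\`eme 0.3) and used as a black box, so there is no internal proof to measure your argument against. Judged as a sketch of the cited theorem itself, your outline does capture the correct overall architecture: dispose of the dihedral case by automorphic induction, establish residual modularity in parallel weight one, lift the ordinary $p$-stabilisations to overconvergent weight-one eigenforms, and then perform the Buzzard--Taylor-style analytic continuation and gluing, in the refined form developed by Kassaei, Sasaki, Kassaei--Sasaki--Tian and Pilloni--Stroh. Your description of the classicality step is a fair cartoon of the hard part of \cite{Pil}.

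There is, however, a genuine gap exactly where your argument starts. You invoke ``the available cases of Serre's modularity conjecture over totally real fields''; no such cases are available in the generality you need --- Khare--Wintenberger is a theorem over $\QQ$ only, and residual modularity of $\overline{\eta}$ over a general totally real field $E$ is precisely the delicate input. The actual argument splits according to projective image: for $A_4$ and $S_4$ the projective image is solvable, and automorphy of $\eta$ itself follows from Langlands--Tunnell together with solvable base change, with no reduction modulo $p$ required; in the one genuinely open case, projective image $A_5$, one is \emph{forced} to take $p=5$ (exploiting $A_5 \cong \PSL_2(\FF_5)$), obtaining modularity of the mod-$5$ representation via the Shepherd-Barron--Taylor elliptic-curve trick (a point on a twist of $X(5)$, available after a solvable totally real base change, whose curve is modular via its mod-$3$ representation and a lifting theorem), and descending at the end by cyclic base change. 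So your ``freedom in the choice of $p$'', which you use to put the places above $p$ in good position, is illusory in the only case that matters. A second, smaller gap is the final identification: residual agreement $\rho_G \equiv \overline{\eta} \bmod \lambda$ together with finiteness of both images does \emph{not} force $\rho_G \cong \eta$ (twisting $\eta$ by a finite-order character of $p$-power order preserves the reduction), and your overconvergent lifts are only pinned down residually. What the cited proof actually does is apply its weight-one modularity lifting theorem (Th\'eor\`eme 0.2 of \cite{Pil}) to the $\lambda$-adic representation $\eta$ itself, so that the glued classical form carries the Galois representation $\eta$ by construction rather than by an a posteriori rigidity argument.
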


\begin{proof}[Proof of Theorem~\ref{thm:Asai}~\eqref{thm:Asai:a}.]
We start by viewing the $S_5$-extension $K/\QQ$ as a surjective projective Galois representation $\rho^\proj: G_\QQ \to \PGL_2(\FF_5)$.
By Proposition~\ref{prop:quer}, it lifts to a linear Galois representation $\rho: G_\QQ \to \GL_2(\Fbar_5)$ with image $2^r_{-}\PGL_2(\FF_5)$ for any fixed choice of $r > \mu(\rho^\proj)$.
Let $\tilde{K}$ be the number field `cut out' by~$\rho$, {\it i.e.\ }the one such that its absolute Galois group equals $\ker(\rho)$.
Then $\Gal(\tilde{K}/\QQ) \cong 2^r_{-}\PGL_2(\FF_5)$, the subgroup $\Gal(\tilde{K}/K)$ is its centre $C_{2^r}$ and
$\Gal(\tilde{K}/E) \cong 2^r\PSL_2(\FF_5)$.

Let now
\[\eta: G_E \twoheadrightarrow G(\tilde{K}/E) \cong 2^r\PSL_2(\FF_5) \to \GL_2(\CC) \]
be obtained from any two-dimensional irreducible complex representation of $2^r\PSL_2(\FF_5)$ (such a representation exists because $2\PSL_2(\FF_5)$ admits two of them and the centre can be realised via scalar matrices).
Let $c\in G_{E}$ be any complex conjugation. As $K$ is totally imaginary, $c$ does not lie in the centre of $2^r\PSL_2(\FF_5)$.
Thus $\eta(c)$ is a non-scalar involution in $\GL_2(\CC)$ and as such has determinant~$1$.
Consequently, $\eta$ is a totally odd representation. 
Then Theorem~\ref{thm:PS} shows the existence of the claimed Hilbert modular form~$G$.

Seeing $\eta$ alternatively as a representation of $\Gal(\tilde{K}/E)$, we naturally identify $\Asai_{G_E}^{G_\QQ}(\eta)$ with $\Asai_{\Gal(\tilde{K}/E)}^{\Gal(\tilde{K}/\QQ)}(\eta)$. The claimed formula is now the content of Proposition~\ref{prop:asai}.
\end{proof}

We next appeal to the functoriality of the Asai transfer.  
Let $L/F$ be a quadratic extension of number fields and $\pi=\bigotimes_{w}\pi_{w}$ be a cuspidal representation of $GL_{2}(\mathbb{A}_{L})$. If $\rho:G_L\rightarrow \GL_{2}(\CC)$ is a Galois representation such that its Artin $L$-function equals $L(s,\pi)$, except for finitely many places, one can associate an $L$-function to $\pi$, denoted $L_{\Asai}(s,\pi)$, in such a way that the local factors of $L_{\Asai}(s,\pi)$ match the local factors of the Artin $L$-function of $\Asai_{G_L}^{G_F}(\rho)$, again, with the exception of finitely many places. We refer the readers to the articles \cite{Ramakrishnan} and to sections 2 and 3 of \cite{Krishna} for the relevant constructions and for the proof of the following result. 
 
\begin{theorem}[{\cite[Theorem 1.4~(a)]{Ramakrishnan}}]\label{thm:rama}
Let $L/F$ be a quadratic extension of number fields, and let $\pi$ be a cuspidal automorphic representation of $\GL_{2}(\AAA_{L})$. Then there exists an automorphic representation $\Pi$ for $\GL_{4}(\AAA_{F})$ such that the $L$-function of $\Pi$ equals $L_{\Asai}(s, \pi)$ except at finitely many finite places. We denote by $\Asai(\pi)$ the automorphic form $\Pi$.
\end{theorem}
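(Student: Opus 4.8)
The statement is exactly the existence of the global Asai (twisted tensor) transfer, and the natural route is the Converse Theorem of Cogdell and Piatetski-Shapiro for $\GL_4$, fed by the analytic theory of the Asai $L$-function. The plan is to first construct a candidate $\Pi=\bigotimes_v\Pi_v$ on $\GL_4(\AAA_F)$ purely locally, then verify the analytic hypotheses of the converse theorem for its twists by cusp forms of small rank, and finally invoke the converse theorem to produce a genuine automorphic $\Pi$ whose $L$-function agrees with $L_{\Asai}(s,\pi)$ away from finitely many places.

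First I would define the local components. For each place $v$ of $F$ and the places $w\mid v$ of $L$, the local Langlands correspondence attaches to $\bigotimes_{w\mid v}\pi_w$ a Weil--Deligne parameter, and applying the purely representation-theoretic Asai (tensor-induction) construction recalled earlier in this section to that parameter yields a $4$-dimensional parameter, hence an irreducible admissible $\Pi_v$ of $\GL_4(F_v)$ via local Langlands for $\GL_4$. By construction the local $L$- and $\varepsilon$-factors of $\Pi_v$ match those of the local Asai transfer, so that the Euler products give $L(s,\Pi)=L_{\Asai}(s,\pi)$, and similarly for $\varepsilon$-factors, up to the finitely many places that are allowed to differ.

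The heart of the argument is the analytic input. The converse theorem for $\GL_4$ requires that for every cuspidal automorphic $\tau$ on $\GL_m(\AAA_F)$ with $m\le 2$ the twisted $L$-function $L(s,\Pi\times\tau)$ be entire, bounded in vertical strips, and satisfy the expected functional equation $L(s,\Pi\times\tau)=\varepsilon(s,\Pi\times\tau)\,L(1-s,\widetilde{\Pi}\times\widetilde{\tau})$. These properties I would obtain from the integral representation of the Asai $L$-function: Asai's Rankin--Selberg integral, in the adelic form developed by Flicker, expresses $L(s,\Asai(\pi)\otimes\tau)$ as an integral of the cusp form for $\pi$ against a ($\tau$-twisted) Eisenstein series on $\GL_2(\AAA_F)$, so that analytic continuation, moderate growth and the functional equation are inherited from those of the Eisenstein series. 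A structural cross-check is the Galois-side identity $\Ind_{G_L}^{G_F}\!\big(\Asai(\rho)|_{G_L}\big)\cong\Asai(\rho)\oplus\Asai(\rho)\otimes\eta$, where $\eta$ is the quadratic character cutting out $L/F$ and $\Asai(\rho)|_{G_L}\cong\rho\otimes\rho^{\sigma}$; combined with inductivity of $L$-functions it reduces the product $L(s,\Asai(\pi)\otimes\tau)\,L(s,\Asai(\pi)\otimes\tau\otimes\eta)$ to a Rankin--Selberg (triple-product) $L$-function over $L$ attached to $\pi\times\pi^{\sigma}$ against the base change of $\tau$, whose niceness is classical by Jacquet--Piatetski-Shapiro--Shalika and Shahidi.

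With the twisted $L$-functions shown to be nice, the Converse Theorem produces an automorphic representation $\Pi'$ of $\GL_4(\AAA_F)$ with $\Pi'_v\cong\Pi_v$ outside a finite set, whence $L(s,\Pi')=L_{\Asai}(s,\pi)$ at all but finitely many places, which is the assertion; we then set $\Asai(\pi):=\Pi'$. The main obstacle is the analytic step: establishing, for the \emph{individual} Asai $L$-function and not merely the product isolated by the inductivity identity, the functional equation with the correct $\varepsilon$-factor and---crucial for the converse theorem---boundedness in vertical strips. This demands either the full Langlands--Shahidi analysis of the Asai $L$-function as a constant-term $L$-function for a quasi-split group containing $\mathrm{Res}_{L/F}\GL_2$ as a Levi factor, or a careful matching of local zeta integrals to local $L$-factors at the ramified and archimedean places in Flicker's integral; the delicate local harmonic analysis at those places is where essentially all of the work lies.
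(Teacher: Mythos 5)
There is nothing in the paper to compare your argument against: Theorem~\ref{thm:rama} is not proved in this article. It is imported verbatim from the literature --- the attribution in the theorem header is \cite[Theorem 1.4~(a)]{Ramakrishnan} --- and the sentence immediately preceding it refers the reader to \cite{Ramakrishnan} and to sections 2 and 3 of \cite{Krishna} ``for the relevant constructions and for the proof''. So the right benchmark is those references, and measured against them your sketch is essentially the proof that exists: the construction of the Asai transfer runs the Cogdell--Piatetski-Shapiro converse theorem for $\GL_4$ with twists by cuspidal representations of $\GL_1$ and $\GL_2$, with the analytic niceness of the twisted Asai $L$-functions supplied either by Flicker's integral representation or, in \cite{Krishna}, by the Langlands--Shahidi theory of the Asai $L$-function, which occurs in the constant term of Eisenstein series on a quasi-split unitary group having $\mathrm{Res}_{L/F}\GL_2$ as a Levi factor (boundedness in vertical strips coming from Gelbart--Shahidi). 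That is exactly the alternative you single out in your closing paragraph, and your Galois-side identity $\Ind_{G_L}^{G_F}\bigl(\Asai(\rho)|_{G_L}\bigr)\cong \Asai(\rho)\oplus\Asai(\rho)\otimes\eta$, together with the correct observation that it only controls the product of the two twists and cannot replace the individual functional equation, is also how the published arguments are organised.

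Two points your sketch glosses over, which is where the cited proofs actually spend effort. First, you assume the twisted $L$-functions $L(s,\Pi\times\tau)$ are entire; this can fail (for instance when $\pi$ is dihedral or conjugate self-dual, so that the candidate $\Asai(\pi)$ is isobaric rather than cuspidal and twisted $L$-functions acquire poles). The standard remedy, used in the references, is to apply the converse theorem after twisting by a suitably highly ramified idele class character to kill all poles, then untwist, treating the non-cuspidal cases separately. Second, the version of the converse theorem with twists unramified outside a finite set $S$ only yields an automorphic $\Pi'$ with $\Pi'_v\cong\Pi_v$ for $v\notin S$ --- you state this correctly, and it is precisely why the theorem as quoted asserts agreement of $L$-functions only outside finitely many places. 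With those caveats made explicit, your proposal is a faithful reconstruction of the cited proof rather than a new route.
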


\begin{proof}[Proof of Theorem~\ref{thm:Asai}~\eqref{thm:Asai:b}.]
The Galois representation $\eta$ is attached to a cuspidal automorphic representation for $GL_{2}(\mathbb{A}_{E})$, say $\pi$, corresponding to the Hilbert modular form~$G$.
By Theorem~\ref{thm:rama} applied with $L=E$ and $F=\QQ$, we obtain that the L-function of $\Asai(\pi)$ equals the Artin L-function of $\Asai_{G_E}^{G_\QQ}(\eta)$.
The result follows by twisting $\Asai(\pi)$ by the Hecke character corresponding to~$\chi$ because that twist corresponds to twisting $\Asai_{G_E}^{G_\QQ}(\eta)$ by~$\chi$.
\end{proof}

\begin{proof}[Proof of Corollary~\ref{cor:Asai}.]
We specialise Theorem~\ref{thm:Asai}~\eqref{thm:Asai:a} to the splitting field $K$ of~$f_5$ over~$\QQ$.
Table~\ref{table:ker} shows that $\chi = \left(\frac{- 151}{\cdot}\right)$ because $\chi|_{C_4} = \det \circ \eta$ and $\chi|_{2_{-}\PGL_2(\FF_5)}$ factors through $\PGL_2(\FF_5) \cong S_5$ as the sign character.
Furthermore, if $\psi$ denotes the character of~$\eta$, by the properties of $\eta$, for any unramified finite place $\fp$ of~$E$ we have $\psi(\fp) = a_\fp(G)$. The proof is now finished by Corollary~\ref{cor:asai} and an inspection of Table~\ref{table:tr}.
\end{proof}

\bibliography{References}

\bibliographystyle{alpha}

\end{document}